\documentclass{amsart}

\usepackage{amsthm,amstext,amscd,amsbsy,amsxtra,latexsym}
\usepackage[english]{babel}
\usepackage[latin1]{inputenc}
\usepackage{verbatim} 
  
 
\usepackage[msc-links]{amsrefs}

\usepackage[capitalise]{cleveref}
\crefname{thm}{Thm.}{}
\crefname{prop}{Prop.}{}
\crefname{lem}{Lem.}{}
\crefname{cor}{Cor.}{}
\crefname{prob}{Problem}{}
\crefname{figure}{Fig.}{}


\newtheorem{thm}{Theorem}

\newtheorem {prop}{Proposition}
\newtheorem {lem}{Lemma}
\newtheorem {defi}{Definition}
\newtheorem {exa}{Example}

\newtheorem {cor}{Corollary}



\newcommand\N{\mathbb N}
\newcommand\Z{\mathbb Z}
\newcommand\Q{\mathbb Q}
\newcommand\R{\mathbb R}

\def\P{\mathbb P}

\def\wP{\mathbb{WP}}

\newcommand\A{\mathcal A}
\newcommand\X{\mathcal X}            

\newcommand\w{\mathfrak w}

\newcommand\iso{\cong}

\DeclareMathOperator\wh{\mathfrak{h}}   
\DeclareMathOperator\awh{\mathfrak{\tilde h} }   

\DeclareMathOperator\Proj{Proj}
\DeclareMathOperator\Spec{Spec}

\DeclareMathOperator\wt{wt}

\newcommand\<{\langle}

\def\a{\alpha}

\def\deg{\mbox{deg }}

\def\iso{\equiv}



\def\p{\mathfrak p}
\def\O{\mathcal O}

\DeclareMathOperator\wgcd{wgcd \, }
\DeclareMathOperator\awgcd{\overline{wgcd}\, }

\DeclareMathOperator\hwgcd{h_{wgcd} \, }
\DeclareMathOperator\hawgcd{h_{\overline{wgcd}}\, }

\def\J{\mathfrak J}

\def\l{\lambda}
\def\A{\mathbb A}

\def\x{\mathbf x}
\def\y{\mathbf y}


%
\begin{document}

\title[Weighted gcd and weighted heights]{Weighted greatest common divisors and weighted heights}

\author{L. Beshaj,  J. Gutierrez,   T. Shaska}

\begin{abstract}
We introduce the weighted greatest common divisor of a tuple of integers and explore some of its basic properties. Furthermore, for a set of heights $\w=(q_0, \ldots  , q_n)$,  we use the concept of the weighted greatest common divisor to define a height $\wh (\p)$    on  weighted projective spaces $\wP_{\w}^n (k)$.    We prove some of the basic properties of this weighted height, including an analogue of the Northcott's theorem for heights on projective spaces.  
\end{abstract}

\maketitle


\def\AA{\mathcal A}

\section{Introduction}

Most of the computations with genus 2 curves or genus 3 hyperelliptic curves,  whether occurring in number theory, mathematical physics, cryptography, or any other area, involve the corresponding tuple of invariants of binary forms.  An isomorphism class of such curves correspond to a projective point $\left[ J_{q_0} :  \dots : J_{q_n} \right]$ of modular invariants with degrees $q_0, \dots , q_n$ respectively. Of course this is true for all hyperelliptic or even superelliptic curves of any genus. In most of these computations picking the point $\left[ J_{q_0} :  \dots : J_{q_n} \right]$ with smallest coordinates is desirable; see for example computations in  \cite{MR3731039}, \cite{MR0114819},   \cite{c-m-sh} or all the algorithms in cryptography for genus $g=2$ and $g=3$ hyperelliptic curves. So how can we pick the  point with smallest coordinates or have some ordering on these moduli points in some reasonable way?  Since the ring of invariants of binary forms is a graded ring, the answer is equivalent to introducing some concept of the greatest common divisor  for weighted projective spaces similar to that of the $\gcd$ of a tuple of integers  in the projective space. If possible we would like to extend the analogy and 
introduce some concept of height in a weighted projective space similarly to the height in a regular projective space, which would make the ordering of points in a weighted projective space possible. The goal of this paper is to suggest a way to handle both of these questions.

In \cite{mandili-sh} was introduced the idea of the weighted common divisor on a tuple of integers with different weights, which was called the weighted greatest common divisor and denoted by $\wgcd$.    
%
In section 2 we give a precise definition of the concept of the weighted greatest common divisor and some of its properties. While the idea of the weighted greatest common divisor seems natural, surprisingly it has not appeared before in the literature.  Questions still remain on  efficient ways of computing such common divisor or whether such weighted greatest common divisor  has similar properties as the regular greatest common divisor in more general rings.  

For a a set of weights $\w=(q_0, \dots , q_n)$ and a number field $K$, the weighted greatest common divisor $\wgcd (\x)$ of 
a tuple $\x = (x_0, \dots , x_n)\in \O_K^{n+1}$ is  defined as the largest integer  $d\in \O_K$ such that $d^{q_i} $ divides $x_i$, for all $i=1, \dots , n$.   The absolute greatest common divisor $\awgcd (\x)$ is  defined as the largest real number   $d$ such that $d^{q_i} $ divides $x_i$, for all $i=1, \dots , n$.   
\cref{recombining} shows that this definitions are precise. 

In section 3 we apply this method to normalize points in weighted projective spaces. 
A point $\p =[x_0, \dots , x_n$ in the weighted projective space $\wP_\w^n (K)$ is said to be \textit{normalized}  if $\wgcd (x_0, \dots , x_n)=1$
and   \textit{absolutely normalized} if $\awgcd (x_0, \dots , x_n)=1$. It turns out that these \textit{normalizations} are unique up to multiplication by a root of unity (cf. \cref{unique}). Moreover, such normalization is unique for well-formed weighted projective spaces. Normalizing point in a weighted projective space this way gives a very efficient way of storing points in such spaces.  This idea was used in \cite{gen-2} and \cite{gen-3} to study the moduli space of genus 2 and genus 3 hyperelliptic curves. 

In section 4 we shift our attention to introducing heights in weighted projective spaces. 
The concept of height on a variety $\AA$ over a number field $K$ is a function $H: \AA (K) \to \R$ whose value at a point $P \in \AA(K)$  measures the arithmetic complexity of $P$. There are two properties that one would want in a height function:  i) there are only finitely many points of bounded height, ii) geometric properties are somewhat preserved. 

Heights on projective spaces are well known in the literature; see \cite{MR2162351}, \cite{MR3525576}, \cite{MR2216774} among many others.   For a point $P\in \P^n (\Q)$, we take integer projective coordinates  $P=[x_0: \dots: x_n]$ with $\gcd (x_0, \dots , x_n)=1$, then  the height is defined as 
\[ H(P) = \max \left\{   |x_0|, \dots , |x_n| \right\}. \]
The definition can be extended to any number field $K$ as follows
\[  H_K (P) = \prod_{v\in M_K} \max \left\{   |x_0|_v^{n_v}, \dots , |x_n|_v ^{n_v}\right\}. \]
where $M_K$ is the set of norms in $K$ and $n_v$ the local degree $[K_v:\Q_v]$. As an immediate consequence of the definition is the \textit{Northcott's theorem}, which says that there are only finitely many points $P \in \P^n (K)$, with height bounded by a constant $B$. A corollary of this statement is the \textit{Kronecker's theorem} which says that for any $\alpha \in K^\star$, $H_K (\alpha)=1$ if and only if $\alpha $ is a root of unity. In other words, there are only finitely many points of bounded height and bounded degree. 

Let $V_K$ be a projective subvariety of $\P^n (K)$ and $S \subset V_K$. In arithmetic,  height functions are used in two main ways:  i) To show that $S$ is finite, it is enough to show that it is a set of bounded height, ii) if $S$ is infinite, determine its density by estimating the growth of the \textit{counting function} 
$ N(S, B) = \# \{ P \in S \; : \; H_K (P) \leq B \}$.
The size of the set of points in $\P^n (K)$  is estimated by Schanuel's theorem. 

Weil extended the definition of height to  all projective varieties via ample divisors and provided an important connection between geometry and arithmetic. N\'eron and Tate introduced canonical heights for Abelian varieties. Perhaps one of the most popular uses of the machinery of heights is the proof of the Mordell-Weil theorem: \textit{For any Abelian variety $\AA_K$, the set of $K$-points $\AA (K)$ is a finitely generated Abelian group}. The main goal of this paper is to investigate how the machinery of heights for projective spaces can be extended to weighted projective spaces. Whether the weighted projective height introduced here can be interpreted in terms of blowups, along the lines of \cite{MR2162351}, will be the focus of future investigation.


Let $\w=(q_0, \dots , q_n)$ be a set of heights and $\wP^n(K)$ the weighted  projective space  over  a number field $K$ and $M_K$ the set of places of $K$.   Let  $\p \in \wP^n(K)$ a point such that  $\p=[x_0, \dots , x_n]$. We define the  \textbf{weighted height} of $\p$   as  
\[
\wh_K( \p ) := \prod_{v \in M_K} \max   \left\{   \frac{}{}   |x_0|_v^{\frac {n_v} {q_0}} , \dots, |x_n|_v^{\frac {n_v} {q_n}} \right\}
\]
The \textbf{weighted logarithmic height} of the point $\p$ is defined as follows
\[\wh^\prime_K(\p) := \log \wh_K(\p)=   \sum_{v \in M_K}   \max_{0 \leq j \leq n}\left\{\frac{n_v}{q_j} \cdot  \log  |x_j|_v \right\}.\]
We prove that $\wh_K( \p )$ is well defined and $\wh_K( \p )\geq 1$.  
Moreover,   if $K=\Q (\awgcd (\p))$ then   similarly to the projective space, 
\[
\wh_K (\p)=  \max_{0 \leq j \leq n}\left\{\frac{}{}|x_j|^{1/q_j}_\infty \right\}.
\]
when $\p$ is normalized and if  $L/K$ is a finite extension, then 
$
\wh_L(\p)=\wh_K(\p)^{[L:K]}.
$
The \textbf{absolute height} of a point $\p \in \wP^n (K)$ is defined as 
$\awh: \wP^n(\bar \Q)  \to [1, \infty)$ for    $\awh(\p) =\wh_K(\p)^{1/[K:\Q]}$.   It turns out that for weighted heights $\Q (\awgcd (\p))$ plays the role that the base field $\Q$ plays for regular projective height, see \cref{prop-5}.  This is no surprise since the greatest common divisor is in $\Q$ for projective heights. 
We are also able to consider the weighted  heights through the Weil height, via the map  $\phi: \wP_\w^n (K) \to \P^n (K)$, where 
\[ [x_0, \dots , x_n] \to \left[ x_0^{\frac q {q_0}}, \dots ,  x_n^{\frac q {q_n}}   \right]  \]
where $q=q_0\cdots q_n$. Then $\wh (\p) = H (\phi (\p)^{\frac 1 q}$, see \cref{wh-H}.
As in the projective space the weighted height is  invariant under Galois conjugation. In other words, for  $\p \in \wP^n(\overline \Q)$ and $\sigma \in G_{ \Q}$ we have $\wh (\p^\sigma) = \wh (\p)$ (cf. \cref{lem_galois_conj}). In \cref{thm_finite} we prove an analogue of Northcott's theorem for weighted heights. 

%
%


The weighted height seem to provide a powerful tool in studying the arithmetic properties of the weighted projective spaces. This could lead to many interesting results in many applications of such spaces. 


\medskip

\noindent \textbf{Aknowledgments:} We want to thank J. Silverman and J. Ellenberg for insightful comments and suggestions which significantly improved this paper.

\section{Weighted greatest common divisors}\label{sect-2}
Let $\x = (x_0, \dots x_n ) \in \Z^{n+1}$ be a tuple of integers, not all equal to zero.  Their greatest common divisor, denoted by $\gcd (x_0, \dots, x_n)$, is defined as the largest integer $d$ such that $d | x_i$, for all $i=0, \dots , n$.  

The concept of the weighted greatest common divisor of a tuple for the ring of integers $\Z$ was defined in \cite{mandili-sh}.   Let $q_0$, \dots, $q_n$ be positive integers.  A set of weights is called the ordered tuple 
$ \w=(q_0, \dots, q_n)$. 

Denote by  $r=\gcd (q_0, \dots , q_n)$ the greatest common divisor of $q_0, \dots , q_n$.  A \textit{weighted integer tuple} is a tuple $\x = (x_0, \dots, x_n ) \in \Z^{n+1}$ such that to each coordinate $x_i$ is assigned the weight $q_i$.  We multiply weighted tuples by scalars $\lambda \in \Q$ via 
\[ \lambda \star (x_0, \dots , x_n) = \left( \l^{q_0} x_0, \dots , \l^{q_n} x_n   \right) \]
For an ordered   tuple of integers  $\x=(x_0, \dots, x_n) \in \Z^{n+1}$, whose coordinates are not all zero, the \textbf{weighted greatest common divisor with respect to the set of weights} $\w$ is the largest integer $d$ such that 
\[ d^{q_i} \, \mid \, x_i, \; \; \text{for all }    i=0, \dots, n.\]
The first natural question arising from this definition  is to know if such integer $d$ does exist for any tuple $\x = (x_0, \dots, x_n ) \in \Z^{n+1}$. Clearly, it does exist because $x_i \leq d^{q_i}$ for all $i =0,\ldots,n$ and the largest integer is unique. We will denote by  $\wgcd (x_0, \dots, x_n) = \wgcd (\x)$.

Given  integer $a$ and non-zero integer $b$,  the integer part of the real number  $\frac {a}{b}$ is denote by 
$\left\lfloor \frac{a}{b}\right\rfloor$, that is, it is the unique integer satisfying:
\[ a = \left\lfloor \frac{a}{b} \right\rfloor b + r, \quad  0\leq r < b.\]
The next result  provides an algorithm to compute the weighted greatest common divisor.
 
\begin{prop}\label{existence_wgcd}  
For a weighted integer tuple $\x=(x_0, \dots, x_n)$ with weights $\w=(q_0, \dots, q_n)$ let  the factorization of the integers $x_i, \,( i=0,\ldots,n )$ into primes:
\[ 
x_i= \prod_{j=1}^t p_j^{\alpha_{j,i}}, \quad \alpha_{j,i} \geq 0, \,  \, j=1,\ldots, t
\] 
Then,  the weighted greatest common divisor $d = \wgcd (\x)$  is given by
\begin{equation}
d = \prod_{j=1}^t p_j^{\alpha_j}
\end{equation}
where,  
\begin{equation} 
\alpha_j = \min \left\{  \left\lfloor \frac{\alpha_{j,i}}{q_i} \right\rfloor, \, i =0, \ldots, n  \right\} \, \text{and} \,\,  j=1,\ldots, t.
\end{equation}
\end{prop}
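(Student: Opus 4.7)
The plan is to reduce divisibility conditions to inequalities on prime exponents, and then observe that the largest integer satisfying a collection of coordinate-wise upper bounds on its exponents is given by taking those upper bounds as equalities.

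First I would argue that any candidate $d$ for $\wgcd(\x)$ must be supported on the primes $p_1,\dots,p_t$: since the tuple $\x$ is not identically zero, choose some $i_0$ with $x_{i_0}\neq 0$; then $d^{q_{i_0}}\mid x_{i_0}$ forces every prime factor of $d$ to appear in $x_{i_0}$, hence to lie in $\{p_1,\dots,p_t\}$. So I may write $d=\prod_{j=1}^t p_j^{\beta_j}$ with $\beta_j\geq 0$, and the problem reduces to determining the largest admissible vector $(\beta_1,\dots,\beta_t)$.

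Next, by uniqueness of prime factorization, the condition $d^{q_i}\mid x_i$ is equivalent to $q_i\beta_j\leq \alpha_{j,i}$ for every $j=1,\dots,t$ (and for $x_i=0$ this holds vacuously, which is consistent with the convention $\alpha_{j,i}=+\infty$). Since $\beta_j$ is a nonnegative integer, the inequality $q_i\beta_j\leq\alpha_{j,i}$ is equivalent to $\beta_j\leq\lfloor\alpha_{j,i}/q_i\rfloor$: the forward direction is immediate, and the reverse follows from $q_i\beta_j\leq q_i\lfloor\alpha_{j,i}/q_i\rfloor\leq\alpha_{j,i}$. Quantifying over all $i=0,\dots,n$ then gives
\[
d^{q_i}\mid x_i \ \text{for all } i \iff \beta_j\leq \min_{0\leq i\leq n}\left\lfloor\frac{\alpha_{j,i}}{q_i}\right\rfloor=\alpha_j \ \text{for all } j.
\]

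Finally, since the $\beta_j$'s are independent, the largest $d$ satisfying all these constraints is obtained by taking $\beta_j=\alpha_j$ for every $j$, yielding $d=\prod_{j=1}^t p_j^{\alpha_j}$ as claimed. The argument is essentially bookkeeping on prime exponents; the only subtlety is checking that one may safely restrict attention to the primes appearing in the $x_i$'s, which uses the hypothesis that not all coordinates vanish. I do not foresee a genuine obstacle here.
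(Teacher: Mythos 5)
Your proposal is correct and follows essentially the same route as the paper's own proof: reduce $d^{q_i}\mid x_i$ to the exponent inequalities $\beta_j\leq \alpha_{j,i}/q_i$, pass to the floors by integrality of $\beta_j$, and maximize coordinate-wise. You merely make explicit two steps the paper leaves as ``straightforward'' (that the support of $d$ lies in $\{p_1,\dots,p_t\}$ because some $x_{i_0}\neq 0$, and the equivalence $q_i\beta_j\leq\alpha_{j,i}\iff\beta_j\leq\lfloor\alpha_{j,i}/q_i\rfloor$), which is a welcome amount of extra care rather than a different argument.
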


\proof If  $d^{q_i} \mid x_i $,  then $d$ should be of the form $ \prod_{j=1}^t p_j^{\beta_j}$ for certain integers $\beta_j \geq 0$. On the other hand, for every prime $p_j$ and since 
$d^{q_i} \mid x_i $, then  
\[ \beta_j \leq \frac{\alpha_{j,i}}{q_i}, \, i=0, \ldots, n. \]
Now, the proof  is straightforward. 
\endproof
%

In the next we illustrate the method by a toy example:
\begin{exa}
Consider the set of weights $\w=(3, 2)$ and the tuple 
\[ \x = \left(   1440, 700 \right) = \left (2^5 \cdot 3^2  \cdot 5 \cdot 7^0,\, 2^2 \cdot 3^0  \cdot 5^2 \cdot 7 \right) \in \Z^2.\]
Then,  $\wgcd (\x) = d = 2^{\alpha_1} \cdot 3^{\alpha_2}  \cdot 5^{\alpha_3} \cdot 7^{\alpha_4} $, where 
\[
\begin{split}
 \alpha_1 &= \min \left\{  \left\lfloor \frac{5}{3} \right\rfloor,  \left\lfloor \frac{2}{2} \right\rfloor \right\} = 1, \quad  \alpha_2 = \min \left\{ \left\lfloor \frac{2}{3} \right\rfloor,  \left\lfloor \frac{0}{2} \right\rfloor \right\} = 0, \\ 
 \alpha_3 & = \min \left\{  \left\lfloor \frac{1}{3} \right\rfloor,  \left\lfloor \frac{0}{2}\right\rfloor \right\} = 0, \quad     \alpha_4 = \min \left\{  \left\lfloor \frac{0}{3}\right\rfloor,  \left\lfloor \frac{1}{2}\right\rfloor \right\} = 0. 
\end{split}
\] 
 Then $d = 2$.
 \qed
\end{exa} 

An integer tuple $\x=(x_0, \ldots , x_n) \in \Z^{n+1} $ with $\wgcd (\x)=1$ is called \textbf{normalized}. 
For an integer  tuple $\x=(x_0, \dots, x_n)$ exist integers  $(y_0, \dots, y_n) \in \Z^{n+1}$ such that
\[ \gcd (x_0, \dots x_n ) = x_0 y_0 + \dots + x_n y_n. \]
For   weights $\w=(q_0, \dots, q_n)$, we have that  $\wgcd (\x) | \gcd(\x)$, say 
\[ \gcd (x) = \lambda \cdot \wgcd (\x).\]
Then,
\[ 
\wgcd (\x) = \left( \frac {x_0} \lambda   \right) y_0  + \left( \frac {x_1} \lambda   \right) y_1  + \cdots + \left( \frac {x_n} \lambda   \right) y_n  =
\sum_{i=0}^n \left( \frac {x_i} \lambda \right) \,  y_i \]
Notice that each $\frac {x_i} \lambda$ is an integer from the definition of the $\wgcd (\x)$. 


The \textbf{absolute weighted greatest common divisor} of  an integer tuple $\x=(x_0, \dots , x_n)$ with respect to the set of weights $\w=(q_0, \dots , q_n)$ is the largest 
real number $d$ such that %
\[ d^{q_i} \in \Z  \quad {\text and} \quad   d^{q_i}\, \mid \, x_i, \; \; \text{for all }    i=0, \dots n.\]
Again, the natural question arising from this  new definition  is to know if such real number $d$  does exist for any tuple $\x = (x_0, \dots, x_n ) \in \Z^{n+1}$. 
Since  $x_i \leq d^{q_i}$ and   there are a finite number of divisors of $x_i$, for all $i =0,\ldots,n$, so we are looking for the largest real number of finite set of numbers and,  the largest is unique.   We will denote by the absolute weighted greatest common divisor by $\awgcd (x_0, \dots, x_n)$.

In order to provide a method to compute the $\awgcd (x_0, \dots, x_n)$, we  need the  following technical  elementary result.

\begin{lem}\label{cecilia}  
Let $d\in \R^+$ a positive real number. If there exists a positive integer $m$ such that  $d^m$ is a positive integer, then $d = z^{1/m}$  for some positive integer $z$. 
Moreover if $m$ is the smallest integer such that $d^m$ is a positive integer,  then  any positive integer $q$ verifying  $d^q$ is  a positive integer,  is a multiple of $m$.
\end{lem}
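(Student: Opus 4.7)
The first assertion is essentially a reformulation of the hypothesis: setting $z := d^m$, by assumption $z$ is a positive integer, and since $d > 0$ we conclude $d = z^{1/m}$. So no work is required there.

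For the second (substantive) assertion, I plan to argue via the Euclidean algorithm together with an elementary divisibility argument on prime factorizations. Let $m$ be minimal with $d^m \in \Z^+$, set $A := d^m$, and suppose $q$ is a positive integer with $d^q \in \Z^+$. Let $g = \gcd(m,q)$, and write $m = g m'$, $q = g q'$ with $\gcd(m',q') = 1$. The strategy is to show that $d^g$ is itself a positive integer; since $g \mid m$ forces $g \le m$, minimality of $m$ then forces $g = m$, i.e. $m \mid q$.

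To prove $d^g \in \Z^+$, note that $d = A^{1/m}$ by the first part, so $d^q = A^{q/m} = A^{q'/m'}$. Setting $B := d^q \in \Z^+$, one has $B^{m'} = A^{q'}$. Factor $A = \prod_{i} p_i^{a_i}$ into distinct primes; then the equation $B^{m'} = \prod_i p_i^{a_i q'}$ shows that $m' \mid a_i q'$ for each $i$. Since $\gcd(m', q') = 1$, this yields $m' \mid a_i$ for every $i$, i.e. $A$ is a perfect $m'$-th power, say $A = C^{m'}$ with $C \in \Z^+$. But then
\[
d^g = d^{m/m'} = A^{1/m'} = C \in \Z^+,
\]
which is the required conclusion.

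The only non-trivial point in this argument is the coprimality step $m' \mid a_i q' \Rightarrow m' \mid a_i$, which is a routine consequence of Euclid's lemma, and the reduction $q/m = q'/m'$ in lowest terms. I do not anticipate any real obstacle; the proof is essentially the standard fact that the set of integers $k$ for which $A^{k/m} \in \Z$ is a subgroup of $\Z$ containing $m$, translated into the language of $d$. The minimality of $m$ plays the same role as choosing a generator of that subgroup.
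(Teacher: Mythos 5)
Your proof is correct, and it is worth noting that the paper itself offers no proof of \cref{cecilia} at all: the lemma is merely asserted as a ``technical elementary result,'' so your argument is not a variant of the authors' reasoning but a genuine filling of an omitted step. Both halves of your write-up check out. The first part is indeed immediate from $z:=d^m$. For the second, the chain $B^{m'}=d^{qm'}=d^{gq'm'}=A^{q'}$ is valid, unique factorization gives $m'\mid a_i q'$, coprimality of $m'$ and $q'$ gives $m'\mid a_i$, and hence $A=C^{m'}$ and $d^g=A^{1/m'}=C\in\Z^+$ (taking the positive real root throughout, which is legitimate since $d>0$). The minimality of $m$ then forces $g=\gcd(m,q)=m$, i.e.\ $m\mid q$. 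The only degenerate case, $A=1$ (so $d=1$ and $m=1$), is handled trivially by your argument as well. Your closing remark that the set $\{k\in\Z : d^k\in\Z^+\}\cup\{0\}$ is closed under the operations needed to run the Euclidean algorithm, so that its least positive element generates it, is exactly the conceptual content of the lemma and would be an acceptable one-line summary; the explicit prime-exponent computation you give is the honest verification of the closure under the step $(m,q)\mapsto\gcd(m,q)$.
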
 

The next  result   provides a method to compute  the absolute weighted greatest common divisor:
 
\begin{prop}\label{existence_awgcd}  
For a weighted integer tuple $\x=(x_0, \dots, x_n)$ with weights $\w=(q_0, \dots, q_n)$ let  the factorization of the integers $x_i, \,( i=0,\ldots,n )$ into primes:
\[x_i= \prod_{j=1}^t p_j^{\alpha_{j,i}}, \quad \alpha_{j,i} \geq 0, \,  \, j=1,\ldots ,t \]
Then,  the  absolute weighted greatest common divisor $d = \awgcd (\x)$  is given by
\[
d = \left (\prod_{j=1}^t p_j^{\alpha_j} \right)^{ \frac{1}{q}}
\]
where,  $q=\gcd(q_0,\ldots,q_n)$, \, $q_i = q \cdot \bar q_i$ and 
\[ \alpha_j = \min \left\{ \left\lfloor \frac{\alpha_{j,i}}{\bar q_i}\right\rfloor, \, i =0,\ldots, n \right\} \, \text{and} \,\,  j=1,\ldots, t.\]
\end{prop}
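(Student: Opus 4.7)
The plan is to reduce the computation of $\awgcd(\x)$ to an ordinary $\wgcd$ computation on a rescaled weight tuple, and then invoke \cref{existence_wgcd}. The bridge between the two notions is \cref{cecilia}, which will force every admissible $d$ to have the form $z^{1/q}$ for some positive integer $z$.

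First I would show that every $d\in\R^+$ satisfying $d^{q_i}\in\Z$ for all $i$ automatically satisfies $d^q\in\Z$. Let $m$ be the smallest positive integer with $d^m\in\Z$; the \emph{moreover} clause of \cref{cecilia} yields $m\mid q_i$ for every $i$, hence $m\mid\gcd(q_0,\dots,q_n)=q$. In particular $d^q\in\Z$, and setting $z:=d^q$ gives $d=z^{1/q}$ for a positive integer $z$.

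Next I would translate the divisibility conditions into conditions on $z$. Since $q_i=q\,\bar q_i$, we have $d^{q_i}=z^{\bar q_i}$, so $d^{q_i}\mid x_i$ is equivalent to $z^{\bar q_i}\mid x_i$. Because $t\mapsto t^{1/q}$ is monotonically increasing on $\R^+$, maximizing $d$ among admissible real values is the same as maximizing the positive integer $z$ subject to $z^{\bar q_i}\mid x_i$ for every $i$. By the very definition of the weighted gcd, this largest $z$ is $\wgcd(\x)$ taken with respect to the rescaled weights $\bar\w=(\bar q_0,\dots,\bar q_n)$. Applying \cref{existence_wgcd} to $\bar\w$ and the given factorizations $x_i=\prod_{j=1}^t p_j^{\alpha_{j,i}}$ then produces $z=\prod_{j=1}^t p_j^{\alpha_j}$ with $\alpha_j=\min_i\lfloor\alpha_{j,i}/\bar q_i\rfloor$, and therefore $\awgcd(\x)=z^{1/q}$ as claimed.

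The only delicate step is the initial reduction: one has to invoke the \emph{moreover} clause of \cref{cecilia} correctly to rule out ``hidden'' fractional exponents of $d$ that are not integer multiples of $1/q$. Once $d=z^{1/q}$ is established, the rest is routine exponent bookkeeping that is already carried out in \cref{existence_wgcd}.
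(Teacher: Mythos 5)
Your proof is correct and follows essentially the same route as the paper's: both arguments use the \emph{moreover} clause of \cref{cecilia} to force $d=z^{1/q}$ with $z=d^q\in\Z$, and then reduce the divisibility conditions to $z^{\bar q_i}\mid x_i$, bounding the prime exponents by $\lfloor\alpha_{j,i}/\bar q_i\rfloor$. The only cosmetic difference is that you package the final step as an explicit application of \cref{existence_wgcd} to the rescaled weights $\bar\w$, whereas the paper inlines that same exponent bookkeeping.
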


\proof    From  \cref{cecilia} we have that  $d^{q}\, \mid \, x_i, \, i=0,\ldots, n$. Then  $d$ should be of the form
 $d= \left( \prod_{j=1}^t p_j^{\beta_j}\right )^\frac{1}{q}$ 
 for certain integers $\beta_j \geq 0$. On the other hand, for every prime $p_j$ and since 
$d^{q_i} \mid x_i $, then  
\[ \beta_j \leq \frac{\alpha_{j,i}}{\bar q_i}, \, i=0,\ldots,n.\]
%
%
Again, the rest of the proof is immediate.

\endproof

\begin{exa}
Consider the set of weights $\w=(6, 8)$ and  the tuple 
\[ \x = \left(   2^{15} \cdot 5^{12} ,  2^{26} \cdot 5^{13} \right) \in \Z^2.\]
Then $q = \gcd(6,8) =2, \, p_1 = 2, \,  p_2 = 5, \, t=2$   and $\bar q_1= 3, \bar q_2 =4$. Then,  $\awgcd (\x) = d = \left( 2^{\alpha_1} \cdot 5^{\alpha_2} \right)^{\frac{1}{2}} $, where 
\[ \alpha_1 = \min \left\{  \left\lfloor \frac{15}{3} \right\rfloor,  \left\lfloor \frac{26}{4} \right\rfloor \right\} = 5, \qquad  \alpha_2 = \min \left\{  \left\lfloor \frac{12}{3}\right\rfloor,  \left\lfloor \frac{13}{4}\right\rfloor \right\} = 3.\]
Hence $d= 2^{\frac{5}{2}} \cdot 5^{\frac{3}{2}}= \sqrt{2^5 \cdot 5^3}$. On the other hand,  $\wgcd (\x) = 2^2 \cdot 5$. As expected,  $\wgcd (\x) \leq \awgcd (\x)$. 
\end{exa}

The next example comes from the theory of invariants of binary sextics.
 
\begin{exa}
Consider the set of weights $\w=(2, 4, 6, 10)$ and a tuple 
\[ \x = \left(   3 \cdot 5^2 , 3^2 \cdot 5^4, 3^3 \cdot 5^6, 3^5 \cdot 5^{10} \right) \in \Z^4.\]
Then,  $\wgcd (\x) = 5$ and \; $\awgcd (\x) = 5 \cdot \sqrt{3}$.  
\end{exa} 
An integer  tuple $\x$ with $\awgcd (\x)=1$ is called \textbf{absolutely normalized}. We summarize in the following lemma.
\begin{lem}
For any weighted integral tuple $\x= (x_0, \dots , x_n) \in \Z^{n+1}$ such that $\w(x_i) = q_i$, $i=0, \dots, n$,  the tuple
$  \y = \frac 1 {\wgcd (\x) } \star \x, $
is integral and normalized. Moreover, the tuple 
$   \bar \y = \frac 1 {\awgcd (\x) } \star \x, $
is also integral and absolutely normalized. 
\end{lem}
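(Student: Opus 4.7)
The plan is to verify each assertion directly from the definitions of the star action, $\wgcd$, and $\awgcd$, together with a maximality argument for the normalization claims.

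First I would unpack the star action. Setting $d = \wgcd(\x)$, one has $\y = \tfrac{1}{d} \star \x$, which by definition means $\y_i = x_i/d^{q_i}$ for each $i$. By the very definition of the weighted greatest common divisor, $d^{q_i} \mid x_i$ for every $i$, so each $\y_i$ lies in $\Z$; this gives integrality. The same move handles $\bar\y$: writing $D = \awgcd(\x)$, the defining property says $D^{q_i} \in \Z$ and $D^{q_i} \mid x_i$, so $\bar y_i = x_i/D^{q_i}$ is again a (rational) integer, establishing integrality of $\bar\y$.

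Next I would prove normalization by a maximality contradiction. Suppose $e := \wgcd(\y) > 1$. Then $e^{q_i} \mid \y_i$ for all $i$, and multiplying both sides by $d^{q_i}$ gives $(ed)^{q_i} \mid d^{q_i}\y_i = x_i$ for all $i$. Since $ed$ is a positive integer strictly larger than $d$ with this property, this contradicts the maximality of $d = \wgcd(\x)$. An analogous argument works for $\bar\y$: if $E := \awgcd(\bar\y) > 1$, then $E^{q_i} \in \Z$, $E^{q_i} \mid \bar y_i$, and therefore $(ED)^{q_i} = E^{q_i} D^{q_i}$ is a positive integer that divides $x_i$ for every $i$. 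Since $ED > D$, this violates the maximality in the definition of $D = \awgcd(\x)$.

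There is no serious obstacle: the only subtlety worth flagging is in the absolute case, where one must check that $(ED)^{q_i}$ is genuinely an integer before invoking the maximality of $D$; but this is immediate because $E^{q_i}$ and $D^{q_i}$ are each integers by definition, so their product is too. The argument uses nothing beyond the defining properties of $\wgcd$ and $\awgcd$ stated earlier, so no appeal to \cref{existence_wgcd} or \cref{existence_awgcd} is needed, though either could be cited to make the contradiction explicit via prime factorizations.
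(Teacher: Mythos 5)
Your proof is correct. The paper in fact omits the argument entirely (the lemma is stated as a summary, with only a remark that the first part is a direct consequence of the definitions), and your verification is exactly the intended one: integrality of $\y_i = x_i/d^{q_i}$ and $\bar y_i = x_i/D^{q_i}$ follows from the defining divisibility conditions, and the maximality-contradiction via $(ed)^{q_i} \mid x_i$ (respectively $(ED)^{q_i} \in \Z$ and $(ED)^{q_i} \mid x_i$) correctly rules out a nontrivial weighted common divisor of the normalized tuple.
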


Normalized tuples are unique up to a multiplication of $q$-root of unity (cf. \cref{unique}), where $q=\gcd (q_0, \dots , q_n)$. 
It is worth noting that a normalized tuple is a tuple with "smallest" integer coordinates (up to multiplication by a unit).  
We will explore this idea of the "smallest coordinates" in the coming sections. 

There are a few natural questions that arise with the weighted greatest common divisor of a tuple of integers.  We briefly discuss the two main ones:  \\

\noindent \textbf{Problem  1:}  The greatest common divisor can be computed in polynomial time using the Euclidean algorithm.  Determine the fastest way to compute the weighted greatest common divisor and the absolute weighted greatest common divisor. \\

\noindent \textbf{Problem  2:}  The greatest common divisor is uniquely determined for unique factorization domains.  Define the concept of the weighted greatest common divisor in terms of ring theory and determine the largest class of rings where it is uniquely defined (up to multiplication by a unit). \\

\subsection{Complexity of computing the  weighted greatest common divisor}
Let  $\x=(x_0, \dots, $ $\dots , x_n) \in \Z^{n+1}$ and weights $\w=(q_0, \dots, q_n)$. Then \cref{existence_wgcd}    and \cref{existence_awgcd} provide a method to compute $\wgcd (\x)$ and $\awgcd (\x)$ (respectively)  for weights $\w=(q_0, \dots, q_n)$. In both, we have to compute  
the integer factorization into primes of all elements of the tuple $\x$. Of course, this is not very efficient comparing with the computation of $\gcd (\x))$. On the other hand, there are several indications that we can not avoid  factoring. For instance, we have that 
$\wgcd(0, \dots,0, x_n)$ is  $\wgcd(x_n)$, then we are looking for the largest factor $d$ of $x_n$ such that 
$d^{q_n}$ divides $x_n$.

Alternatively, we can factor only an integer, instead of $n+1$, and then recombining factors in an appropriate and clever way gives us the following. 

\begin{lem}\label{recombining} 
With the above notation, let $g=\gcd (x_0, \dots, x_n) $ and  $g= \prod_{i=1}^r p_i^{s_i}$ its prime factorization. 
\begin{enumerate}
\item For $i =1, \ldots, r$, let 
\[ 
\beta_i=  \min   \left\{ \left\lfloor \frac{s_i}{q_j} \right\rfloor \,:\, j =0, \ldots, n \right\}.
\]
Then,  the weighted greatest common divisor $d = \wgcd (\x)$  is given by
\[
d = \prod_{i=1}^r p_i^{\alpha_i},
\]
where  $\alpha_i$ are the largest integers such that $d^{q_i}$ divides $x_i$ and $\alpha_i \leq \beta_i$. 
\item  Let $q = \gcd(q_0,\ldots, q_n), \, q_j = q \cdot \bar q_j, \, j =0,\ldots, n$ and for  $i=1,\ldots,r$ let
\[
\beta_i = \min \left\{  \left\lfloor \frac{s_i} {\bar q_j} \right\rfloor, \,  j=0,\ldots, n \right\}
\]
Then,  the absolute weighted greatest common divisor $d= \awgcd (\x)$   is 
%
\[ 
d = \left(\prod_{i=1}^r p_i^{\alpha_i}\right)^\frac{1}{q}
 \]
where  $\alpha_i$ are the largest integers such that $d^{q_i}$ divides $x_i$ and $\alpha_i \leq \beta_i$. 
\end{enumerate}

\end{lem}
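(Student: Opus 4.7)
The plan is to combine \cref{existence_wgcd} and \cref{existence_awgcd} with the observation that only primes dividing $g$ can contribute to $\wgcd(\x)$ or $\awgcd(\x)$. This reduces the factorization work from $n+1$ integers $x_j$ down to the single integer $g$; once the primes of $g$ are known, the remaining data is extracted by trial division rather than by a full prime factorization of each coordinate.

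First I would establish the key reduction. If $p$ is a prime dividing $d=\wgcd(\x)$, then $p^{q_j}\mid x_j$ for every $j$, so in particular $p\mid x_j$ for every $j$, hence $p\mid g$. Therefore only the primes $p_1,\dots,p_r$ appearing in the factorization of $g$ can occur in $\wgcd(\x)$. The same reduction applies to $\awgcd(\x)$: by \cref{cecilia} the quantity $d^q$ is a positive integer, and the divisibility built into \cref{existence_awgcd} forces every prime dividing $d$ to divide each $x_j$, hence to divide $g$.

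Next I would apply \cref{existence_wgcd} to write $\alpha_i=\min_j\lfloor e_{ij}/q_j\rfloor$ where $e_{ij}=v_{p_i}(x_j)$. Since $s_i=\min_j e_{ij}$, there is an index $j^{\ast}$ with $e_{ij^{\ast}}=s_i$, so $\alpha_i\leq\lfloor s_i/q_{j^{\ast}}\rfloor$, which ties $\alpha_i$ to the computable quantities $\lfloor s_i/q_j\rfloor$ making up $\beta_i$. The exact value of $\alpha_i$ is then obtained by trial-dividing each $x_j$ by the \emph{known} prime $p_i$, incrementing the candidate exponent until one of the $n+1$ divisibility conditions fails; no full factorization of $x_j$ is needed. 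Part (2) follows by the same argument applied through \cref{existence_awgcd}, with $q_j$ replaced by $\bar q_j=q_j/q$ and the outer $q$-th root appearing exactly as in that proposition.

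The main obstacle is keeping the inequalities straight: one must exploit carefully that $s_i$ is the \emph{minimum} of the $e_{ij}$, in order to obtain both the constraint linking $\alpha_i$ to $\beta_i$ (via the coordinate $j^{\ast}$ realizing the minimum) and the trial-division search range. Beyond this bookkeeping no new ingredient is required: the substance of the lemma is the algorithmic improvement that factoring $g$ alone, rather than each $x_j$, suffices to compute $\wgcd(\x)$ and $\awgcd(\x)$.
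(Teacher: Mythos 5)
Your opening reduction is exactly the paper's argument: from $d^{q_j}\mid x_j$ one gets $d\mid x_j$ for every $j$, hence $d\mid g$, so only the primes of $g$ can occur in $\wgcd(\x)$; and for part (2) the paper likewise invokes \cref{cecilia} to get $d^q\mid g$ and then declares the rest immediate. So the skeleton of your proposal coincides with the published proof, which is itself only this reduction plus the claim that the remainder is straightforward.

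The genuine gap is in the step where you claim to tie $\alpha_i$ to $\beta_i$. Writing $e_{ij}=v_{p_i}(x_j)$, \cref{existence_wgcd} gives $\alpha_i=\min_j\lfloor e_{ij}/q_j\rfloor$, and choosing $j^{\ast}$ with $e_{ij^{\ast}}=s_i$ you correctly obtain $\alpha_i\le\lfloor s_i/q_{j^{\ast}}\rfloor$. But $\beta_i$ is the \emph{minimum} of $\lfloor s_i/q_j\rfloor$ over $j$, and nothing forces the index $j^{\ast}$ minimizing $e_{ij}$ to also minimize $\lfloor s_i/q_j\rfloor$; you have only bounded $\alpha_i$ by one particular element of the set whose minimum is $\beta_i$, which does not give $\alpha_i\le\beta_i$. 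Indeed that inequality fails: for $\x=(p^{20},p^{10})$ with $\w=(2,1)$ one has $g=p^{10}$, $s_1=10$, $\beta_1=\min\{\lfloor 10/2\rfloor,\lfloor 10/1\rfloor\}=5$, yet $\wgcd(\x)=p^{10}$, so $\alpha_1=10>\beta_1$. What your argument actually proves is $\alpha_i\le\max_j\lfloor s_i/q_j\rfloor=\lfloor s_i/\min_j q_j\rfloor$, and that, not $\beta_i$, is the valid cap on the trial-division search; the same correction is needed in part (2) with $q_j$ replaced by $\bar q_j$. To be fair, the paper's own proof never addresses this point either, so you have not omitted an argument the authors supply --- but your proposal asserts a link to $\beta_i$ that your inequalities do not (and, as stated, cannot) establish.
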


\begin{proof}   To prove 1)  we have that  $d^{q_i}$ divides $x_i$, then $d$ divides $x_i$ and it implies $d$ divides $g$. Now, the  proof is straightforward.
To prove 2) we have that $d^{q_i}$ divides $x_i$ and from \cref{cecilia},  $d^q$ divides $x_i$ and it implies that $d^q$ divides $g$. The rest proof is immediate.
\end{proof}

It is  well known that the number of divisors  $D(m)$ of integer  $m$ is  $m^{{\small o}(1)}$. So, in the worst case the previous result  \cref{recombining}  get an exponential  time complexity.

\subsection{Weighted greatest common divisor over general rings}

Let $R$ be a commutative ring with identity.  Consider the set of weights $\w= (q_0, \dots , q_n)$ as in the previous section  and a tuple $\x \in R^{n+1}$. For any $\alpha \in R$, the ideal generated by $\alpha$ is denoted by $(\alpha)$.  The \textbf{weighted greatest common divisor ideal} is defined as 
\[ \J (\x) = \bigcap_{ \left( \p^{q_i} \right)   \supset (x_i)}  \p \]
over all primes $\p$ in  $R$.  If $R$ is a PID then the $\wgcd (\x)$ is the generator of the principal ideal $\J (\x)$. 
In general, for $R$ a unique factorization domain, for any point $x = (x_1, \dots, x_n) \in R^n$ we let $r=\gcd (x_0, \dots , x_n)$.  
 Factor $r$ as a product of primes, say 
$ r = u \cdot \prod_{i=1}^s \p_i,$ 
where $u$ is a unit and $\p_1, \dots \p_s$ are primes. Then the weighted gcd $\wgcd(\x)$ is defined as
\[ 
\wgcd(\x) = \prod_{ \stackrel{i=1}{p^{q_i} | x_i} }^s \p
\]
Thus, the weighted gcd (as the common gcd) is defined up to multiplication by a unit. 
The \textbf{absolute weighted greatest common divisor ideal} is defined as 
\[ \bar \J (\x) = \bigcap_{ \left( \p^{\frac {q_i} r} \right) \supset (x_i)}  \p \]
over all primes $\p$ in  $R$.  

The above definitions can be generalized to GCD domains.  An  integral domain  $R$ is called  a \textbf{GCD domain}  if  any two elements of $R$ have a greatest common divisor.  Examples of GCD-domains include unique factorization domains and valuation domains, see \cite{kaplansky} for more details.    
%


\def\b{\beta}

\subsection{Generalized weighted greatest common divisors}
Following on the ideas of  \cite{MR2162351}  we give a brief review of the generalized greatest common divisors and how they can be defined for weighted greatest common divisors as well. Let $k$ be a number field, $\O_k$ its ring of integers, $M_k$ the set of absolute values of $k$, $M_k^0$ all non-archimedian places, and $M_k^\infty$ archimedian places of $M_k$. 

For any two elements $\a, \b \in \O_k$ the greatest common divisor is defined as 
\begin{equation} 
\gcd (\a, \b) = \prod_{p \in \O_k} p^{\min \{ \nu_p (\a), \, \nu_p (\b) \}} ,
\end{equation}
where $\nu_p$ is the valuation corresponding to the prime $p$; see \cite{MR2162351} for details.   The logarithmic $\gcd$ is 
\begin{equation} 
\log \gcd (\a, \b) = \sum_{\nu \in M_k^0} \min \, \{ \nu (\a), v(\b) \} 
\end{equation}
For a valuation $\nu \in M_k$, define 
\begin{equation}
\begin{split}
\nu^+ : k & \longrightarrow [0, \infty], \\
\alpha & \longrightarrow  \max \{ v(\alpha), 0\}. \\
\end{split}
\end{equation}
The \textbf{generalized logarithmic greatest common divisor} of two elements $\a, \b \in k$ is defined as 
\begin{equation} 
h_{gcd} (\a, \b) = \sum_{\nu \in M_k} \min \{ \nu^{+} (\a), \nu^{+} (\b) \}. 
\end{equation}
Notice that $\nu^+$ can be viewed as a height function on $\P^1 (k) = k \cup \{ \infty \}$,  where we set $\nu^+ (\infty) = 0$. This leads to the generalized logarithmic greatest common divisor being viewed also as a height function:
\begin{equation}
\begin{split}
G_\nu : \P^1 \times \P^1 &   \to [0, \infty] \\
(\a, \b) & \to \min \{ \nu^+ (\a), \nu^+ (\b) \} \\
\end{split}
\end{equation}
In view of the above we have
\[ h_{gcd} (\a, \b) = \sum_{\nu \in M_k} G_\nu. \]
In \cite{MR2162351} it was given a theoretical interpretation of the function $G_\nu$ in terms of blowups. 
%

\begin{lem}
For a weighted integer tuple $\x=(x_0, \dots, x_n) \in \O_k^{n+1}$    the weighted greatest common divisor  is given by
\[
\wgcd_\w (\x) = \prod_{p \in \O_k}    p^{ \min \left\{ \left\lfloor  \frac {\nu_p (x_0)}  {q_0}   \right\rfloor,  \dots ,  \left\lfloor  \frac {\nu_p (x_n)}  {q_n}   \right\rfloor  \right\}     }
\]
\end{lem}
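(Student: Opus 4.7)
The plan is to mirror the argument of \cref{existence_wgcd}, replacing factorization of rational integers into rational primes by the valuation-theoretic description available in the Dedekind domain $\O_k$. Throughout I interpret ``largest $d$'' in the divisibility order, i.e.\ $d_1 \preceq d_2$ iff $d_1 \mid d_2$, which is the only sensible meaning for the $\wgcd$ in $\O_k$ and agrees with the usual one when $\O_k = \Z$.

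First I translate the defining divisibility condition into valuations. For any prime $p$ of $\O_k$ and any element $d \in \O_k$, the relation $d^{q_i} \mid x_i$ is equivalent to $q_i\, \nu_p(d) \leq \nu_p(x_i)$ holding for every non-archimedean place $\nu_p$. Since $\nu_p(d)$ is a nonnegative integer and $q_i$ a positive integer, this rearranges to
\[
\nu_p(d) \leq \left\lfloor \frac{\nu_p(x_i)}{q_i} \right\rfloor.
\]
Imposing the condition for every $i = 0, \dots, n$ gives, at each prime $p$,
\[
\nu_p(d) \leq \min_{0 \leq i \leq n} \left\lfloor \frac{\nu_p(x_i)}{q_i} \right\rfloor.
\]

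Next I argue that to maximize $d$ in the divisibility order, one should take equality in the displayed bound at every prime $p$. Only finitely many primes appear in the factorizations of the $x_i$, so only finitely many exponents are nonzero and the resulting product $\prod_p p^{\min_i \lfloor \nu_p(x_i)/q_i \rfloor}$ defines a bona fide element of $\O_k$ (up to a unit). By construction it satisfies the divisibility constraint, and by the valuation characterization of divisibility any other candidate divides it, so it is the weighted gcd.

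The main obstacle is that $\O_k$ need not be a unique factorization domain, so the notation ``$\prod_{p \in \O_k}$'' in the statement is slightly abusive. I would handle this by reading the identity either at the level of the principal ideal generated by $\wgcd_\w(\x)$, using unique factorization of ideals in the Dedekind domain $\O_k$, or by restricting to number fields of class number one; in either case the place-by-place argument above is unaffected because only the valuations $\nu_p$ enter. Essentially no new ideas beyond \cref{existence_wgcd} are required, only the replacement of the $p$-adic exponent by $\nu_p$.
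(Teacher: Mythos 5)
Your proof is correct and takes essentially the same route as the paper, whose own proof simply invokes \cref{existence_wgcd} and identifies the prime-factorization exponents $\alpha_{j,i}$ with the valuations $\nu_p(x_i)$. Your extra care about $\O_k$ failing to be a UFD (reading the formula at the level of ideals, since the ideal $\prod_p p^{\min_i \lfloor \nu_p(x_i)/q_i\rfloor}$ need not be principal) addresses a point the paper glosses over, but the underlying prime-by-prime minimization argument is the same.
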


\proof  The proof is elementary.   From \cref{existence_wgcd} we have that 
\[ \wgcd_\w (\x) = \prod_{j=1}^t p_j^{\a_j},
\]
where $\a_{i, j} = \min \left\{  \left\lfloor     \frac {\a_{j, i}} {q_i} \right\rfloor \; | \: i=0, \ldots , n    \right\}$, for each $j=1, \ldots , t$.  But $\a_{j, i} = \nu_p (x_i)$, for each $i=0, \ldots , n$.  The rest follows. 

\qed

As above, the logarithmic weighted greatest common divisor is 
\[ 
\log \wgcd_\w (\x) = \sum_{\nu \in M_k^0}  \min \left\{ \left\lfloor  \frac {\nu_p (x_0)}  {q_0}   \right\rfloor,  \dots ,  \left\lfloor  \frac {\nu_p (x_n)}  {q_n}   \right\rfloor  \right\} 
\]
Consider now $\x=(x_0, \dots, x_n) \in k^{n+1}$ with weights $\w=(q_0, \dots, q_n)$.  The \textbf{generalized weighted greatest common divisor} is defined as follows 
\[
 \hwgcd (\x) = \prod_{p \in \O_k}    p^{ \min \left\{ \left\lfloor  \frac {\nu_p^+ (x_0)}  {q_0}   \right\rfloor,  \dots ,  \left\lfloor  \frac {\nu_p^+ (x_n)}  {q_n}   \right\rfloor  \right\}     }
\]
and the   logarithmic weighted greatest common divisor is 
\[ 
\log \hwgcd (\x) = \sum_{\nu \in M_k^0}  \min \left\{ \left\lfloor  \frac {\nu_p^+ (x_0)}  {q_0}   \right\rfloor,  \dots ,  \left\lfloor  \frac {\nu_p^+ (x_n)}  {q_n}   \right\rfloor  \right\} 
\]
Let  $q= \gcd (q_0, \dots , q_n)$ and $\bar q_i = \frac {q_i} q$. Hence, we get a new set of well-formed weights $\bar q= \left(  \bar q_0, \bar q_1, \dots , \bar q_n \right)$.

The factorization of coordinates of  $\x$   into primes is $x_i = \prod_{p\in \O_k} p^{\nu_p (x_i)}$, for $i=0, \ldots , n$.  Then we have:

\begin{lem}
The absolute weighted greatest common divisor  is
\[
\awgcd (\x) =  \left( \prod_{p\in \O_k} p^{ \min \, \left\{ \left\lfloor \frac {\nu_p(x_0)} {\bar q_0}   \right\rfloor,    \dots ,  \left\lfloor \frac {\nu_p(x_n)} {\bar q_n}   \right\rfloor  \right\}     } \right)^{\frac 1 q}
\]
\end{lem}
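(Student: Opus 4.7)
The plan is to mirror the proof of the previous lemma (the one for $\wgcd_\w(\x)$) and reduce the claim to \cref{existence_awgcd} by translating the prime‑factorization exponents into $p$‑adic valuations in $\O_k$. Although \cref{existence_awgcd} was stated for $\x \in \Z^{n+1}$, the only feature of $\Z$ used there is unique factorization, which carries over to the Dedekind domain $\O_k$ at the level of fractional ideals: each nonzero $x_i \in \O_k$ has the ideal factorization $(x_i) = \prod_{p \in \O_k} p^{\nu_p(x_i)}$ with almost all exponents zero.

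First, I would invoke \cref{existence_awgcd} (in its Dedekind‑domain form) to write
\[
\awgcd(\x) \;=\; \Bigl(\prod_{j=1}^t p_j^{\alpha_j}\Bigr)^{1/q},
\qquad
\alpha_j \;=\; \min_{0 \leq i \leq n}\Bigl\lfloor \tfrac{\alpha_{j,i}}{\bar q_i}\Bigr\rfloor,
\]
where the $p_j$ range over the finitely many primes dividing some $x_i$ and $\alpha_{j,i}$ is the exponent of $p_j$ in the factorization of $x_i$. Next, I would identify $\alpha_{j,i} = \nu_{p_j}(x_i)$ for every $i,j$, and observe that for any prime $p$ of $\O_k$ not among the $p_j$ one has $\nu_p(x_i) = 0$ for all $i$, so the corresponding exponent in the extended product is $\min_i \lfloor 0/\bar q_i \rfloor = 0$ and contributes a trivial factor. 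Extending the product from $\{p_1,\dots,p_t\}$ to all primes $p \in \O_k$ is therefore harmless, and gives exactly the claimed expression.

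The one step that requires a moment of care is the legitimacy of transferring \cref{existence_awgcd} from $\Z$ to $\O_k$: I would check that the derivation there (the characterization of $d$ as the unique largest real number with $d^q$ an algebraic integer in $\O_k$ and $d^{q_i}\mid x_i$) depends only on unique factorization into primes and on \cref{cecilia}, both of which hold in the Dedekind setting. In particular, $d^q \in \O_k$ forces $d$ to be of the form $\bigl(\prod p_j^{\beta_j}\bigr)^{1/q}$ with $\beta_j \in \Z_{\geq 0}$, and the divisibility condition $d^{q_i}\mid x_i$ becomes $\beta_j \bar q_i \leq \nu_{p_j}(x_i)$, i.e.\ $\beta_j \leq \lfloor \nu_{p_j}(x_i)/\bar q_i\rfloor$ for every $i$. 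Maximizing each $\beta_j$ independently yields the minimum over $i$, completing the identification.

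The main (mild) obstacle is purely bookkeeping: ensuring that the floor‑function optimization really does decouple across primes, which is the reason we are allowed to take the minimum inside the product. Since the conditions $d^{q_i}\mid x_i$ translate, prime‑by‑prime, into independent inequalities on the $\beta_j$'s, this decoupling is immediate, and no further work is needed.
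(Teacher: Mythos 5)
Your proposal is correct and follows essentially the same route as the paper, which simply reduces the claim to \cref{existence_awgcd} and identifies the prime-factorization exponents $\alpha_{j,i}$ with the valuations $\nu_p(x_i)$, extending the product over all primes trivially. The only difference is that you explicitly justify the transfer from $\Z$ to $\O_k$ via unique factorization of ideals, a point the paper leaves implicit.
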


\proof
The proof is similar to the previous Lemma, but using \cref{existence_awgcd}. 

\qed

Accordingly we define the  \textbf{generalized absolute weighted greatest common divisor}  by
\[ 
\hawgcd (\x) = \frac 1 q \, \prod_{p\in \O_k} p^{ \min \, \left\{ \left\lfloor \frac {\nu_p^+ (x_0)} {\bar q_0}   \right\rfloor,    \dots ,  \left\lfloor \frac {\nu_p^+ (x_n)} {\bar q_n}   \right\rfloor  \right\}     }
\]
and the   logarithmic absolute weighted greatest common divisor is 
\[ 
\log \hwgcd (\x) = \sum_{\nu \in M_k^0}  \min \left\{ \left\lfloor  \frac {\nu_p^+ (x_0)}  {\bar q_0}   \right\rfloor,  \dots ,  \left\lfloor  \frac {\nu_p^+ (x_n)}  {\bar q_n}   \right\rfloor  \right\} 
\]
Let us see an example.
\begin{exa} Let $\w=(2,4,6,10)$ and $\p \in \wP_\w^3 (\Q)$ such that 
\[ 
\p = [ 2^3\cdot 3^2\cdot 7^3 ;  2^5\cdot 3^7\cdot 7 ;  2^7\cdot 3^7\cdot 7^3 ; 2^{11}\cdot 3^{13}\cdot 7^5].
\]
Then  $\wgcd (\p) = 2 \cdot 3$  and   $\awgcd (\p) = 2\cdot 3$. 
The normalized point is
\[ \bar \p = [ 2 \cdot 7^3 ; 2 \cdot 3^3 \cdot 7 ;  2\cdot 3 \cdot 7^3 ; 2 \cdot 3^3 \cdot 7^5]
\]
Using the previous two lemmas we have 
\[
\begin{split}
 \wgcd (\p) & = 2^{ \a_1 } \cdot 
3^{ \a_2    } 
\cdot 7^{ \a_3 }  = 2\cdot 3 \cdot 7^0= 2\cdot 3. 
\end{split}
\]
where 
\[
\a_1=\min \left\{ \left\lfloor  \frac 3 2  \right\rfloor, \left\lfloor  \frac 5 4  \right\rfloor, \left\lfloor  \frac 7 6  \right\rfloor  , \left\lfloor  \frac {11} {10}  \right\rfloor \right\}, \;
\a_2=\min \left\{ \left\lfloor  \frac 2 2  \right\rfloor, \left\lfloor  \frac 7 4  \right\rfloor, \left\lfloor  \frac 7 6  \right\rfloor  , \left\lfloor  \frac {13} {10}  \right\rfloor \right\},
\]
\[
\a_3=\min \left\{ \left\lfloor  \frac 3 2  \right\rfloor, \left\lfloor  \frac 1 4  \right\rfloor, \left\lfloor  \frac 3 6  \right\rfloor  , \left\lfloor  \frac {5} {10}  \right\rfloor \right\}  
\]
Similarly for $\awgcd$ we have
\[
\begin{split}
 \awgcd (\p) & = \left( 2^{ \b_1 } \cdot  3^{ \b_2  }  \cdot 7^{ \b_3  } \right)^{\frac 1 2}  = \left(    2^2 \cdot 3^2 \cdot 7^0 \right)^{\frac 1 2 } = 2\cdot 3, 
\end{split}
\]
where
\[
\b_1= \min \left\{ \left\lfloor  \frac 3 1  \right\rfloor, \left\lfloor  \frac 5 2  \right\rfloor, \left\lfloor  \frac 7 3  \right\rfloor  , \left\lfloor  \frac {11} {5}  \right\rfloor \right\},  \;
\b_2=\min \left\{ \left\lfloor  \frac 2 1  \right\rfloor, \left\lfloor  \frac 7 2  \right\rfloor, \left\lfloor  \frac 7 3  \right\rfloor  , \left\lfloor  \frac {13} {5}  \right\rfloor \right\}
\]
\[ 
\b_3=\min \left\{ \left\lfloor  \frac 3 1  \right\rfloor, \left\lfloor  \frac 1 2  \right\rfloor, \left\lfloor  \frac 3 3  \right\rfloor  , \left\lfloor  \frac {5} {5}  \right\rfloor \right\} 
\]
Of course this is no surprise sine the normalization $\p$ can be seen easily that is absolutely normalized. 
\qed
\end{exa}

\section{Normalized points in weighted projective spaces}
Let $K$ be a field   and  $(q_0, \dots , q_n) \in \Z^{n+1}$ a fixed tuple of positive integers called \textbf{weights}.   Consider the action of $K^\star = K \setminus \{0\}$ on $\A^{n+1} (K)$ as follows
\begin{equation}\label{equivalence}
 \lambda \star (x_0, \dots , x_n) = \left( \l^{q_0} x_0, \dots , \l^{q_n} x_n   \right) 
\end{equation}
for $\l\in K^\ast$.  The quotient of this action is called a \textbf{weighted projective space} and denoted by   $\wP^n_{(q_0, \dots , q_n)} (K)$. 
The space $\wP_{(1, \dots , 1)} (K)$ is the usual projective space.  The space $\wP_w^n$ is called \textbf{well-formed} if   
\[ \gcd (q_0, \dots , \hat q_i, \dots , q_n)   = 1, \quad \text{for each } \;  i=0, \dots , n. \]
While most of the papers on weighted projective spaces are on well-formed spaces, we do not assume that here.   We will denote a point $\p \in \wP_w^n (K)$ by $\p = [ x_0 : x_1 : \dots : x_n]$. 

Weighted projective spaces are   interesting since we can present a non singular algebraic variety as a hypersurface in a weighted projective space and deal with it as it would be a nonsingular hypersurface in a weighted projective space. For more on weighted projective spaces one can check \cite{MR879909}, \cite{MR627828}, \cite{MR2852925}, \cite{igor} among many others.


In projective spaces, by means of the Veronese embedding, we could embed the same variety in different projective spaces. It turns out that we can do the same for varieties embedded in weighted projective spaces.

As above we let $k$ be a field.  Let $R = \oplus_{ i \geq 0} R_i$ be a graded ring.  We further assume that 
\begin{itemize}
\item[(i)] $R_0=k$ is the ground field

\item[(ii)]  $R$ is finitely generated as a ring over $k$

\item[(iii)]  $R$ is an integral domain
\end{itemize}

Consider the polynomial ring $k[x_0, \dots , x_n]$ where each $x_i$ has weight $\wt x_i = q_i$.  Every polynomial is a sum of monomials $x^m= \prod x_i^{m_i}$ with weight $\wt (x^m) = \sum m_i q_i$.  A polynomial $f$ is \textbf{weighted homogenous of weight $m$} if every monomial of $f$ has weight $m$.  

An ideal in a graded ring  $I \subset R$ is called \textbf{graded} or \textbf{weighted homogenous} if $I = \oplus_{n\geq 0} I_n$, where $I_n = I\cap R_n$.  Hence, $R = k[x_0, \dots , x_n]/I$, where $\deg x_i = q_i$ and $I$ is a homogenous prime ideal. 

To the prime ideal $I$ corresponds an irreducible affine variety $CX= \Spec R = V_a (I) \subset \A^{n+1}$.

\begin{defi}\label{w.h.p}
A polynomial $f(x_0, \dots , x_n)$ is called \textbf{weighted homogenous} of degree $d$ if  it satisfies the following
\[  f(\l^{q_0} x_0, \l^{q_1} x_1, \dots , \l^{q_n} x_n) = \l^d f(x_0, \dots , x_n). \]
\end{defi}
\noindent Let us consider a simple example of weighted homogenous polynomials. 
\begin{exa} Let us consider a binary weighted form with weighted degree $d$ and let $w = (q_0, q_1)$ be  respectively the weights of $x_0$ and $x_1$. Then 
\[f(x_0, x_1) = \sum_{d_0, d_1} a_{d_0, d_1}  x_0^{d_0} x_1^{d_1},  \, \, \text{ such that }   \, \, d_0 q_0+ d_1 q_1 = d \]
and in decreasing powers of $x_0$ we have
\[ f(x_0, x_1)  =  a_{d/q_0, 0}  x_0^{d/q_0} + \dots + a_{d_0, d_1} x_0^{d_0} x_1^{d_1} + \dots+  a_{0, d/q_1} x_1^{d/q_1}\] 
By dividing this polynomial with $x_1^{d/q_1}$ and making a change of coordinates $X = x_0^{q_1} / x_1^{q_0}$ we get 
\begin{equation}
\begin{split}f(x_0, x_1)  &= a_{d/q_0, 0}  x_0^{d/q_0} + \dots + a_{d_0, d_1} x_0^{d_0} x_1^{d_1} + \dots+  a_{0, d/q_1} x_1^{d/q_1}\\
& =  a_{d/q_0, 0}  \frac{x_0^{d/q_0} }{x_1^{d/q_1}} + \dots +  a_{d_0, d_1} \frac{ x_0^{d_0} x_1^{d_1} }{x_1^{d/q_1}} + \dots+  a_{0, d/q_1}\\
& =  a_{d/q_0, 0} X^{d/q_0q_1} + \dots +  a_{d_0, d_1}  X^{d_0/q_1}+ \dots+  a_{0, d/q_1} = f(X)
\end{split}
\end{equation}
\end{exa}
Notice that the condition $f(P)=0$ is defined on the equivalence classes of Eq.~\eqref{equivalence}. We define the quotient 
$V_a (I)\setminus\{0\}$ by the above equivalence by $V_h (I)$, where $h$ stands for homogenous.  Then, we denote 
$X= \Proj R = V_h (I) \subset \wP_{\w}^n(k)$.  It is a projective variety. Notice that $CX$ above is the \textbf{affine cone} over the projective variety $V_h (I)$. 

Next we will define truncated rings and see the role that they play in the Veronese embedding. 
Define the $d$'th truncated ring $R^{[d]} \subset R$ by
\[ R^{[d]} = \bigoplus_{d |n} R_n  = \bigoplus_{i \geq 0} R_{di}, \]
Hence, $R^{[d]}$ is a graded ring and the elements have degree $di$ in $R$ and  degree $i$ in $R^{[d]}$.   If $R$ is a graded ring then its subring $R^{[d]}$ is called the $d$-th Veronese subring.

\begin{exa}
Let $R = k [x, y]$ with $wt(x) = wt (y) = 1$. Then,
\[ R^{[2]} = \bigoplus_{i \geq 0} R_{2i} = \bigoplus_{i \geq 0} \left \{f(x, y) \in k[x, y]  \left | \frac{}{} \right. \mbox{deg }(f) = 2i \right \}. \]
Notice that the even degree polynomials in $k[x, y]$ are generated by $x^2$, $xy$, and $y^2$ hence we have that 
\[ R^{[2]} = k[x^2, xy, y^2] \cong k [ u, v, w ] \big / \< uw -v^2\rangle \]
Now, if we consider the projective spaces we have that 
\[ \Proj \, \,( k[x, y] )= \P_{(1, 1)} =  \P^1\]
while
\[ \Proj \, \, (k [ u, v, w ] \big / \< uw -v^2\rangle  )=  V( uw -v^2 ) \subseteq \P_{(1, 1, 1)} = \P^2 \]
Hence we have that,
\[\P^1 (k) =  \Proj ( k[x, y] ) \cong \Proj \, \,( k[x, y]^2 ) \subseteq \P^2 (k). \]
This is exactly the degree-2 Veronese embedding of \, $\P^1(k) \hookrightarrow \P^2 (k)$.  The truncation of graded rings in this case corresponds to the degree-$2$ Veronese embedding. 
\end{exa}

The proof of the following lemma can be found in \cite{igor}. 

\begin{lem} \label{proj_iso}
Let $R$ be a graded ring and $d \in \N$. Then,
\[ \Proj   R \cong \Proj   R^{[d]}  \]
\end{lem}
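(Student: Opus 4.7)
The plan is to prove this via the standard affine open cover of a $\Proj$ scheme. Recall that for any graded ring $S$, the scheme $\Proj S$ is covered by the affine opens $D_+(f) = \Spec S_{(f)}$ as $f$ ranges over homogeneous elements of positive degree in $S$, where $S_{(f)}$ denotes the degree-zero subring of the localization $S_f$. The inclusion $R^{[d]} \hookrightarrow R$ induces, on the level of sets, a map $\Proj R \to \Proj R^{[d]}$ sending a relevant homogeneous prime $P \subset R$ to $P \cap R^{[d]}$, and I would show this is an isomorphism of schemes by checking it locally on a compatible affine cover.

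The heart of the argument is the ring identity $R_{(f)} = R^{[d]}_{(f)}$ for any homogeneous $f \in R^{[d]}$ of degree $di$ in $R$ (equivalently, degree $i$ in the Veronese grading). Indeed, a degree-zero element of $R_f$ has the form $a/f^n$ with $a \in R_{nid}$, and since $nid$ is divisible by $d$, we automatically have $a \in R^{[d]}$; thus $a/f^n \in R^{[d]}_{(f)}$, and the reverse inclusion is immediate. This identifies the affine pieces $D_+(f)$ on both sides whenever $f$ lies in $R^{[d]}$.

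Next I would verify that these opens cover both schemes. They cover $\Proj R^{[d]}$ by definition. To cover $\Proj R$, note that for any homogeneous $g \in R_m$ with $m \geq 1$ the power $g^d$ lies in $R_{md} \subset R^{[d]}$, and $D_+(g) = D_+(g^d)$ because a homogeneous prime contains $g$ if and only if it contains $g^d$. Hence the sets $D_+(f)$ with $f$ ranging over homogeneous elements of $R^{[d]}_+$ form an open cover of $\Proj R$ as well, and the local isomorphisms glue in the obvious way into a global scheme isomorphism.

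The step most prone to error is the degree bookkeeping: $f \in R^{[d]}$ of Veronese degree $i$ corresponds to degree $di$ in $R$, and one must consistently use the original grading of $R$ when forming $R_{(f)}$ in order for the key identity above to come out correctly. Once this convention is fixed, the argument reduces to the two elementary observations displayed in the previous two paragraphs, and no additional ingredients are needed.
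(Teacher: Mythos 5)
Your argument is correct and is the standard proof of this fact: the paper itself gives no proof, deferring to the cited reference \cite{igor}, and the argument there is exactly your affine-cover computation, namely that $R_{(f)} = R^{[d]}_{(f)}$ for homogeneous $f \in R^{[d]}_+$ together with the observation that $D_+(g) = D_+(g^d)$ lets these opens cover $\Proj R$. Your degree bookkeeping is right, so nothing further is needed.
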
 
 
For some large enough $N$ and using the above \cref{proj_iso} we can embed a weighted projective space $\wP_w$ into a \lq \lq straight " projective space $\P^N$.  

\begin{prop} 
Consider the weighted polynomial ring $R = k [x_0, \dots, x_n]$ , where $q_0, \dots, q_n$ are positive integers such that the weight of  $x_i$   is $q_i$ and  $d= \gcd (q_0, \dots , q_n)$.  The following are true: 

i)  $R^{[d]}  = R$.  Thus, 
\[ \wP^n_{(q_0, \dots, q_n) }(R)  = \wP^n _{ \left(\frac{q_0}{d}, \dots, \frac{q_n}{d} \right)} (R).\]  

ii) Suppose that $q_0, \dots, q_n$ have no common factor, and that $d$ is a common factor of all $a_i$ for $i \neq j$ (and therefore coprime to $a_j$). Then the $d$'th truncation of $R$ is the polynomial ring 
\[ R^{[d]} = k [x_0, \dots, x_{j-1}, x_j^d, x_{j+1}, \dots, x_n].\]
Thus, in this case 
\[ \wP^n_{(q_0, \dots, q_n) } (R)=   \wP^n _{\left(\frac{q_0}{d}, \dots, \frac{q_{j-1}}{d}, q_j, \frac{q_{j+1}}{d}, \dots, \frac{q_n}{d}\right)}(R^{[d]}).\]

In particular by passing to a truncation $R^{[d]}$ of $R$ which is a polynomial ring generated by pure powers of $x_i$, we can always write any weighted projective space as a well formed weighted projective space. 
\end{prop}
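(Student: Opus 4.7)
The plan is to verify the three claims in order, each time reducing the statement about $\wP^n_{\w}$ to a concrete statement about the graded ring $R=k[x_0,\dots,x_n]$ with $\deg x_i=q_i$, and then invoking \cref{proj_iso} to pass to projective spaces.

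For part (i), the key observation is purely combinatorial: every monomial $x^m=\prod x_i^{m_i}$ has weighted degree $\sum m_i q_i$, and since $d=\gcd(q_0,\dots,q_n)$ divides every $q_i$, this degree is automatically divisible by $d$. Hence $R_n=0$ whenever $d\nmid n$, which gives $R^{[d]}=\bigoplus_{d\mid n}R_n=R$ as rings. Regrading so that the $R^{[d]}$-degree equals the $R$-degree divided by $d$ turns the weight of $x_i$ into $q_i/d$, and \cref{proj_iso} then yields the claimed isomorphism of weighted projective spaces.

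For part (ii), I would analyze which monomials $x^m$ lie in $R_{dn}$. With $d\mid q_i$ for $i\neq j$ and $\gcd(q_0,\dots,q_n)=1$ forcing $\gcd(d,q_j)=1$, the congruence $\sum m_iq_i\equiv 0\pmod d$ reduces to $m_jq_j\equiv 0\pmod d$, which by coprimality is equivalent to $d\mid m_j$. Consequently the subring $R^{[d]}$ is generated as a $k$-algebra by $x_0,\dots,x_{j-1},x_j^d,x_{j+1},\dots,x_n$; these remain algebraically independent inside the integral domain $R$, so $R^{[d]}$ is genuinely a polynomial ring on them. Reading off the $R^{[d]}$-degrees (that is, $R$-degrees divided by $d$) gives the weight tuple $(q_0/d,\dots,q_{j-1}/d,q_j,q_{j+1}/d,\dots,q_n/d)$, and one more application of \cref{proj_iso} finishes the isomorphism.

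For the final assertion, I would iterate (i) and (ii). First apply (i) to reduce to $\gcd(q_0,\dots,q_n)=1$. If the resulting space is still not well-formed, some $d_j:=\gcd(q_0,\dots,\hat q_j,\dots,q_n)>1$; the overall gcd being $1$ forces $\gcd(d_j,q_j)=1$, so the hypotheses of (ii) apply with $d=d_j$, yielding a polynomial ring presentation in pure powers of the variables and a strictly smaller sum of weights. Since the weights are positive integers, this process terminates, and the terminal weighted projective space is well-formed by construction. The main obstacle in this plan is the careful bookkeeping in step (ii): one has to be sure that the listed generators of $R^{[d]}$ really do generate (not just lie inside) the truncation, and that passing to $x_j^d$ does not introduce hidden relations — both are resolved by the coprimality $\gcd(d,q_j)=1$ and the integral domain property of $R$.
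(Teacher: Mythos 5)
Your argument is correct and follows essentially the same route as the paper: part (i) via divisibility of every monomial's weighted degree by $d$, and part (ii) via the observation that $x_j$ can only enter a degree divisible by $d$ as a $d$-th power, combined with \cref{proj_iso}. You additionally supply an explicit termination argument for the final "well-formed" assertion, which the paper merely asserts; that added detail is sound (the overall gcd remains $1$ after each step of type (ii), so the iteration is legitimate).
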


 \begin{proof}
  i) If $d | q_i$ for all $i = 0, \dots, n$ then the degree of every monomial is divisible by $d$ and so part i) is obvious. Hence, the truncation does not change anything. 
 
 ii)  Since $d | q_i$ for every $i \neq j$ then $x_i \in \R^{[d]}$ for every $i \neq j$. But the only way that $x_j$ can occur in a monomial with degree divisible by $d$ is as a $d$'th power. Given
\[R = k [x_0, \dots, x_j, \dots,  x_n]\]
then 
\[R^{[d]} = k [x_0, \dots, x_j^d, \dots,  x_n]\]
and  
 \[\begin{split} 
 \wP^n_{(q_0, \dots, q_n) }(R) &= \Proj \, \,  k_w [x_0, \dots, x_j, \dots,  x_n] \iso\Proj \, \,  k_{w/d} [x_0, \dots, x_j^d, \dots,  x_n] \\
 & = \wP^n _{\left(\frac{q_0}{d}, \dots, \frac{q_{j-1}}{d}, q_j, \frac{q_{j+1}}{d}, \dots, \frac{q_n}{d}\right)}(R^{[d]}).
 \end{split}\]
 This completes the proof. 
 \end{proof}
 
Hence, the  above result  shows that any weighted projective space is isomorphic to a well formed weighted projective space.

\def\q{\mathfrak q}
\def\size{\mathfrak s}

For the rest of this paper we will always assume that $R$ is the ring of integers $\O_k$ for some number field $k$. 
We will call a point $\p \in \O_k^n$ a \textbf{normalized point} if the weighted greatest common divisor of its coordinates is 1.  Similarly an \textbf{absolutely normalized point} is called a point $\p$ such that $\awgcd (\p) =1$.

\begin{lem}\label{unique} 
Let $\w=(q_0, \dots , q_n)$ be a set of weights and $q=\gcd (q_0, \dots , q_n)$.   For any point $\p \in \wP_w^n (k)$, there exists its normalization given by 
\[ \q = \frac 1 {\wgcd (\p)} \star \p .
\]
Moreover, this normalization  is unique up to a multiplication by a $q$-root of unity.
\end{lem}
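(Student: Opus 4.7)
Writing $\p=(x_0,\dots,x_n)$ and $d=\wgcd(\p)\in\O_k$, the defining property of $\wgcd$ gives $d^{q_i}\mid x_i$ for every $i$. Hence each coordinate of $\q=\frac{1}{d}\star\p$ lies in $\O_k$, and $\q$ represents the same class as $\p$ in $\wP_w^n(k)$. That $\wgcd(\q)=1$ then follows by contradiction: if some non-unit $e\in\O_k$ satisfied $e^{q_i}\mid x_i/d^{q_i}$ for every $i$, then $(de)^{q_i}\mid x_i$ for every $i$, contradicting the maximality of $d$ as a weighted common divisor of $\p$.

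\textbf{Uniqueness, reducing to units.} Let $\q_1=(y_0,\dots,y_n)$ and $\q_2=(z_0,\dots,z_n)$ be two normalizations of the same point. Since they share a $\star$-orbit, there exists $\lambda\in k^{\ast}$ with $z_i=\lambda^{q_i}y_i$ for every $i$. Writing $\lambda=a/b$ as a coprime fraction (or, in the general Dedekind setting, using the coprime decomposition $(\lambda)=IJ^{-1}$ of the fractional ideal $(\lambda)$), the equation $b^{q_i}z_i=a^{q_i}y_i$ together with the coprimality of $a^{q_i}$ and $b^{q_i}$ forces $b^{q_i}\mid y_i$ for every $i$. Hence $b$ is a weighted common divisor of $\q_1$, and the normalization of $\q_1$ promotes $b$ to a unit of $\O_k$. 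The symmetric argument gives $a\in\O_k^{\times}$, so $\lambda\in\O_k^{\times}$.

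\textbf{From units to $q$-th roots of unity, and the main obstacle.} To upgrade ``$\lambda$ is a unit'' to ``$\lambda^q=1$'' for $q=\gcd(q_0,\dots,q_n)$, the natural move is to choose an index with $y_i\neq 0$ so that $\lambda^{q_i}=z_i/y_i$ is a specific unit, and then to apply Bezout $q=\sum u_iq_i$ to write $\lambda^q=\prod_i(\lambda^{q_i})^{u_i}$, pinning down $\lambda^q$ as a unit determined by $\q_1$ and $\q_2$ alone. The complementary observation is that any $\zeta$ with $\zeta^q=1$ satisfies $\zeta^{q_i}=1$ for every $i$ (since $q\mid q_i$), hence acts as the identity under $\star$; this places the $q$-th roots of unity inside the stabilizer of every normalized tuple, so they indeed realize the claimed ambiguity.

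The main obstacle is exactly this last step: in a number field with infinite unit group $\O_k^{\times}$ (rank $r_1+r_2-1$), being a unit is strictly weaker than being torsion. Closing this gap must exploit the joint integrality and normalization of both $\q_1$ and $\q_2$ across all coordinates simultaneously rather than coordinate-by-coordinate, to rule out the non-torsion units; alternatively one can invoke that the previous reduction already shows $\lambda\star\q_1=\q_2$ with both sides normalized, forcing $\lambda^q\in\O_k^{\times}$ to fix every nonzero coordinate ratio and hence collapse to $1$.
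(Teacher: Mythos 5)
Your existence argument and your reduction of the uniqueness question to units are both correct, and the middle step is actually \emph{more} careful than the paper's own proof: the paper passes directly from $\alpha_i=r^{q_i}\beta_i$ to the assertion ``$r^{q_i}=1$ for all $i$'' with no justification, whereas you supply the missing coprime-fraction argument showing that the denominator of $\l$ is a weighted common divisor of one normalized tuple and the numerator of the other, hence that $\l\in\O_k^{\times}$. That part of your write-up could replace the corresponding step in the paper with profit.

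However, you explicitly leave open the final step, from ``$\l$ is a unit'' to ``$\l^q=1$'', and this is a genuine gap, not a formality: the implication is false in the generality in which the lemma is stated. Over a number field $k$ with $\O_k^{\times}$ infinite, take a fundamental unit $u$ and weights $(1,1)$; then $[1:1]$ and $[u:u]$ are two normalized representatives of the same point (a unit weighted common divisor is still trivial for the purposes of normalization) related by $\l=u$, which is not a root of unity. Even over $\Q$, where your unit reduction pins $\l$ down to $\pm1$, one only gets $\l^q=1$ when $q$ is even: with weights $(1,2)$ the tuples $(1,1)$ and $(-1,1)$ are both normalized, equivalent under $\l=-1$, and $-1$ is not a $q$-th root of unity for $q=1$. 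Your proposed repair --- that $\l^q$ ``fixes every nonzero coordinate ratio and hence collapses to $1$'' --- does not work, because fixing the ratios only reproduces $\l^{q_i}=z_i/y_i$, which is exactly the unit one is trying to show is trivial. So the statement can only be salvaged by restricting to $k=\Q$ (where the Bezout identity $q=\sum u_iq_i$ does give $\l^q=(\pm1)^q$, i.e.\ uniqueness up to sign on odd-weight coordinates) or by reading ``unique'' as ``unique up to $\O_k^{\times}$''; the paper's proof conceals this by asserting the key equality outright, and its own example following the lemma already exhibits the sign ambiguity. You correctly located the obstacle but did not close it, so the proposal is not a complete proof as written.
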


\proof  Let $\p = [x_0 : \dots , x_n ]  \in \wP_w^n (k)$ and $\p_1 = [\alpha_0 : \dots : \alpha_n ]$ and $\p_2 = [ \beta_0 : \dots : \beta_n]$ two different normalizations of $\p$. Then exists non-zero $\l_1, \l_2 \in k$   such that 
\[ 
\p = \l_1 \star \p_1 = \l_2 \star  \p_2,
\]
or in other words 
\[ 
(x_0, \dots , x_n) = \left( \l_1^{q_0} \alpha_0 , \dots , \l_1^{q_i} \alpha_i , \dots \right) =  \left( \l_2^{q_0} \beta_0 , \dots , \l_2^{q_i} \beta_i , \dots \right).
\]
Thus,
\[ 
  \left(   \alpha_0 , \dots ,   \alpha_i , \dots , \alpha_n \right) =  \left( r^{q_0} \beta_0 , \dots , r^{q_i} \beta_i , \dots , r^{q_n} \beta_n \right).
\]
for $r= \frac {\l_2} {\l_1} \in k$. Thus, $r^{q_i}=1$ for all $i=0, \dots , n$.  Therefore,  $r^q=1$.  This completes the proof.   
\qed

Thus we have the following:

\begin{cor}\label{cor-1}
Points  in a well-formed weighted projective space  $ \wP_w^n (k)$ have unique  normalizations. 
\end{cor}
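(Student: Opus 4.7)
The plan is to deduce the corollary directly from \cref{unique} by observing that well-formedness forces $q := \gcd(q_0, \dots, q_n) = 1$, at which point the $q$-th roots of unity collapse to $\{1\}$ and the normalization becomes genuinely unique.

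First I would spell out the arithmetic observation. By definition, the space $\wP_w^n(k)$ is well-formed iff $\gcd(q_0, \dots, \hat q_i, \dots, q_n) = 1$ for every $i = 0, \dots, n$. Since the gcd of a superset divides the gcd of any subset, we have
\[
q = \gcd(q_0, \dots, q_n) \;\bigm|\; \gcd(q_0, \dots, \hat q_i, \dots, q_n) = 1,
\]
so $q = 1$. (In fact it is enough to observe this for a single $i$.)

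Next I would apply \cref{unique} to any point $\p \in \wP_w^n(k)$. That lemma produces a normalization $\q = \frac{1}{\wgcd(\p)} \star \p$ and asserts that any two normalizations of $\p$ differ by the $\star$-action of some $r \in k$ with $r^q = 1$. Plugging in $q = 1$ forces $r = 1$, so if $\p_1 = [\alpha_0 : \dots : \alpha_n]$ and $\p_2 = [\beta_0 : \dots : \beta_n]$ are two normalizations of the same point, then componentwise $\alpha_i = r^{q_i} \beta_i = \beta_i$ for all $i$. Hence $\p_1 = \p_2$ as tuples, not merely as points of $\wP_w^n(k)$.

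There is no real obstacle here; the only thing to take care of is not to confuse the various gcds (of the weights versus of the coordinates) and to notice that well-formedness is strictly stronger than $\gcd(q_0,\dots,q_n)=1$, but it is this weaker consequence that is actually used. Everything else is immediate from \cref{unique}.
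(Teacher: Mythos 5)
Your argument is correct and is precisely the one the paper intends: the corollary appears immediately after \cref{unique} with no separate proof, the implicit step being exactly your observation that well-formedness forces $q=\gcd(q_0,\dots,q_n)=1$ (since the full gcd divides each gcd with one weight omitted), so the $q$-th roots of unity in \cref{unique} collapse to $\{1\}$ and the normalization is genuinely unique. Nothing further is needed.
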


Here is an example which illustrates \cref{unique}.

\begin{exa}
Let $\p = [x_0, x_1, x_2, x_3]  \in \wP_{(2,4,6,10)}^3 (\Q)$ be a normalized point.  
Hence, 
\[ \wgcd (x_0, x_1, x_2, x_3 ) =1.\]
Since $q = \gcd (2, 4, 6, 10)=2$, then we can take $r$ such that   $r^2=1$.  Hence, $r=\pm 1$.   Therefore,  the point
\[ (-1)  \star \p = [-x_0 : x_1 : -x_2 : -x_3 ] \]
is also normalized.  

However, if $\p = [x_0, x_1, x_2, x_3]  \in \wP_{(1, 2, 3, 5)}^3 (\Q)$ is normalized then it is unique, unless some of the coordinates are zero. For example the points $[0, 1, 0, 0]$ and $[0, -1, 0, 0]$ are equivalent and both normalized. 

\qed
\end{exa}

Thus, the weighted greatest common divisor gives us a very nice and efficient way to represent point in weighted moduli spaces via normalized points.  Such normalized points have as small coefficients as possible.  We define the \textbf{magnitude} or \textbf{naive height} of a point $\p \in \wP_w^n (k)$ as
\begin{equation} 
\size (\p) = \max \left\{ |x_0|_\infty^{\frac 1 {q_0}} , \dots ,  |x_n|_\infty^{\frac 1 {q_n}}      \right\} 
\end{equation}
where $x_0, \dots , x_i$ are the coordinates of the normalized point. 

\begin{lem}
Let $\w$ be a set of weights, $k$ a number field,  and $\wP_\w^n (k)$ a well-formed weighted projective space. Then the  function
\begin{equation}
 \size : \wP_\w^n (k) \to \R 
\end{equation}
is well defined. 
\end{lem}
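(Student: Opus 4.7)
The plan is to reduce the claim to the uniqueness of the normalized representative established in \cref{cor-1}. First I would fix a point $\p \in \wP_\w^n(k)$ and appeal to \cref{unique} to produce a normalized representative $(x_0,\dots,x_n)$ with $\wgcd(x_0,\dots,x_n)=1$. Because $\wP_\w^n(k)$ is well-formed, $\gcd(q_0,\dots,\hat q_i,\dots,q_n)=1$ for every $i$, and the overall integer $q:=\gcd(q_0,\dots,q_n)$ divides each of these partial gcds, so $q=1$. The ambiguity described in \cref{unique} therefore collapses to a $1$st root of unity, i.e.\ to the trivial scalar, and the normalized tuple representing $\p$ is literally unique.

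Next I would compute $\size(\p)=\max_i |x_i|_\infty^{1/q_i}$ on this unique tuple. Each $|x_i|_\infty$ is a nonnegative real number and each $q_i$ is a positive integer, so $|x_i|_\infty^{1/q_i}\in[0,\infty)$, and the maximum of finitely many such values is a well-defined element of $\R$. Because the input tuple is canonically attached to $\p$, the resulting value depends only on $\p$ and not on a choice of representative, which is exactly the statement of well-definedness.

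The only substantive point is the passage from well-formedness to $q=1$, which is the one-line observation $q\mid \gcd(q_0,\dots,\hat q_i,\dots,q_n)=1$; the rest is bookkeeping. As a sanity check I would note that even without the well-formed hypothesis, any two normalized representatives must differ by an element $r\in k^\ast$ with $r^{q_i}=1$ for all $i$, hence by a root of unity, so that $|r|_\infty=1$ and $|r^{q_i}x_i|_\infty=|x_i|_\infty$; the maximum would then be unchanged regardless. Thus the well-formed hypothesis is a convenient means of pinning down a single canonical tuple, but the well-definedness of $\size$ is in fact robust to the stronger forms of non-uniqueness allowed by \cref{unique}.
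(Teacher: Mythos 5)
Your proof takes essentially the same route as the paper's: both reduce the claim to the uniqueness of the normalized representative guaranteed by \cref{cor-1} for well-formed spaces, after which $\size(\p)=\max_i |x_i|_\infty^{1/q_i}$ is a finite maximum of real numbers. Your closing remark---that any residual ambiguity is by a root of unity $r$ with $|r|_\infty=1$, so the maximum is unchanged---is actually a useful supplement rather than mere bookkeeping, since it covers points with some zero coordinates (e.g.\ $[0:1:0:0]\sim[0:-1:0:0]$ in the paper's own example), where the normalized tuple is not literally unique even in a well-formed space.
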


\proof   Since $\wP_\w^n (k)$ is well-formed then from \cref{cor-1} for each point $\p \in \wP_\w^n (k)$  its normalization is unique. 
The rest follows. 

\qed

The above function provides a nice way to order points in $\wP_\w^n (k)$.  Moreover, each point in a well-formed space $\wP_\w^n (k)$ is now uniquely represented with "small" coefficients.  This idea, first suggested in \cite{mandili-sh} was explored in \cite{gen-2} and \cite{gen-3} to create a database and hyperelliptic curves of genus $g=2, 3$. 

Of course, the values of  $\size$   change as the field is extended.   We see an example below.

\begin{exa}
Let $\w=(2, 3, 5)$ and $\p=[7:0:0]\in \wP_\w^2 (\Q)$.  Then  $\wgcd_{\Q} (\p) =1$ and its normalization is $\bar \p = \p$.  Hence,   $ \size_\Q (\p) = \sqrt{7}$.

Consider now the field $K=\Q (\sqrt{7})$ and the same point $\p = [7:0:0] \in \wP_\w^2 (K)$.  Then $\wgcd_K (\p) = \sqrt{7}$ and the normalization of $\p$ is $\bar \p = [1:0:0]$.  Hence, $\size_K (\p) =1$.  

\qed

\end{exa}
 
So a different measuring of the size of points in $\wP_\w^n (k)$ is needed which behaves similarly to  a height function on the regular projective space $\P^n (k)$.  We explore this in the next section. 

\section{Heights on the weighted projective space}\label{sec-4}

In our attempt to define a height on the weighted projective space we fix the following notation for the rest of the paper. \\

$k$ \; is a number field.

$\O_k$ \; ring of integers of $k$.

$M_k$ \;  a complete set of absolute values of $k$

$M_k^0$ \;  the set of all non-archimedian places in $M_k$

$M_k^\infty$ \;  the set of archimedian places

$\X/k$ \;  a smooth projective variety defined over $k$. \\

Let $k$ be a given number field, $\O_k$ its ring of integers, and $M_k$ the set of absolute values on $k$.    For a place $\nu \in M_k$, the corresponding absolute value is denoted by $| \cdot |_\nu$, normalized with respect to $k$ such that the product formula holds and the Weil height is 
\[ 
H(x) =  \prod_\nu \max \{ 1, |x|_\nu \}. 
\]
For a point $\x \in k^{n+1}$ and a place $\nu \in M_k$ we define $| \x |_\nu = \max_i |x_i|_\nu$. For $\x = (x_0: \cdots : x_n)  \in \P^n (k)$ we have the height of $\x$ defined as 
\[ 
H(\x) = \prod_\nu \max \, \{ |x_0|_\nu, \ldots , |x_n|_\nu \} = \prod_\nu |x|_\nu 
\]
Because of the product formula, the height of $\x$ is well defined.

Let $k$ be an algebraic number field and $[k:\Q]=n$. With $M_k$ we will denote the set of all absolute values in $K$.   For $v \in M_k$, the \textbf{local degree at $v$}, denoted $n_v$ is 
\[n_v =[k_v:\Q_v]\]
where $K_v, \Q_v$ are the completions with respect to $v$. 
Let $L/k$ be an extension of number fields, and let $v \in M_k$ be an absolute value on $k$. Then
\[\sum_{\substack{w \in M_L\\w|v}}[L_w : k_v]= [L:k]\]
is known as the \textbf{degree formula}. For  $x \in k^\star$ we have the \textbf{product formula}  
\begin{equation}\label{prod.formula}
\prod_{v\in M_k}  |x|^{n_v}_v  =  1.   
\end{equation}
%
Given a point $\p \in \P^n(\overline \Q)$ with   $\p=[x_0, \dots, x_n]$, the \textbf{field of definition} of $\p$ is 
\[\Q(\p)=\Q   \left(  \frac  {x_0} {x_j}, \dots , \frac {x_n} {x_j}   \right)\] 
for any $j$ such that $x_j \neq 0$.  Next we try to generalize some of these concepts for the weighted projective spaces $\wP_{\w} (k)$.  

Let $\w=(q_0, \dots , q_n)$ be a set of heights and $\wP^n(k)$ the weighted  projective space  over  a number field $k$.   Let  $\p \in \wP^n(k)$ a point such that  $\p=[x_0, \dots , x_n]$.   Without any loss of generality we can assume that $\p$ is normalized.

 The \textbf{field of  absolute normalization } of $\p$ is defined as   $ \Q \left( \awgcd(\p)   \right)$. 
%

\begin{defi}\label{height}
Let $\w=(q_0, \dots , q_n)$ be a set of heights and $\wP^n (k)$ the weighted  projective space  over  a number field $k$.   Let  $\p \in \wP^n(k)$ a point such that  $\p=[x_0, \dots , x_n]$. We define the  \textbf{weighted multiplicative height} of $P$   as  
\begin{equation}\label{def:height}
\wh_k( \p ) := \prod_{v \in M_k} \max   \left\{   \frac{}{}   |x_0|_v^{\frac {n_v} {q_0}} , \dots, |x_n|_v^{\frac {n_v} {q_n}} \right\}
\end{equation}
The \textbf{logarithmic height} of the point $\p$ is defined as follows
\begin{equation}
\wh^\prime_k(\p) := \log \wh_k (\p)=   \sum_{v \in M_k}   \max_{0 \leq j \leq n}\left\{\frac{n_v}{q_j} \cdot  \log  |x_j|_v \right\}.
\end{equation}
\end{defi}
Next we will give some basic properties of heights functions.
\begin{prop}
Let $k$ be a number field and $\p\in \wP^n (k)$ with weights $w = (q_0, \dots, q_n)$. Then the following are true:

i) The height $\wh_k(\p)$ is well defined, in other words it does not depend on the choice of  coordinates of $\p$

ii) $\wh_k(\p) \geq 1$.
\end{prop}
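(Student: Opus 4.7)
For part (i), my plan is to reduce to the product formula \eqref{prod.formula}. Any two tuples $(x_0, \ldots, x_n)$ and $(y_0, \ldots, y_n)$ that represent the same point of $\wP_\w^n(k)$ differ by $\lambda \star$ for some $\lambda \in k^\ast$, i.e.\ $y_i = \lambda^{q_i} x_i$. A one-line computation (using $|\lambda^{q_i} x_i|_v = |\lambda|_v^{q_i}|x_i|_v$) gives
\[
|y_i|_v^{n_v/q_i} \;=\; |\lambda|_v^{n_v}\cdot |x_i|_v^{n_v/q_i},
\]
so the factor $|\lambda|_v^{n_v}$, being independent of $i$, pulls out of the $\max$ in the local contribution. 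Taking the product over $v \in M_k$ produces an overall $\prod_{v} |\lambda|_v^{n_v}$, which equals $1$ by the product formula \eqref{prod.formula}. Hence $\wh_k$ assigns the same value to both tuples and the definition descends to $\wP_\w^n(k)$.

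For part (ii), the strategy is to adapt the classical lower bound $H(\p) \geq 1$ for the Weil height on $\P^n(k)$. Since $\p \in \wP_\w^n(k)$, at least one coordinate $x_j$ is nonzero. For every place $v \in M_k$, the quantity $|x_j|_v^{n_v/q_j}$ is one of the entries in the max defining the local factor of $\wh_k$, so
\[
\prod_{v \in M_k} |x_j|_v^{n_v/q_j} \;\leq\; \wh_k(\p).
\]
The left-hand side factors as $\left(\prod_{v\in M_k} |x_j|_v^{n_v}\right)^{1/q_j}$ and equals $1$ by applying the product formula \eqref{prod.formula} to the nonzero element $x_j \in k^\ast$. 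Therefore $\wh_k(\p) \geq 1$.

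I do not expect a real obstacle: both parts hinge on the deliberate cancellation between the exponents $q_i$ appearing in the weighted scaling and the exponents $1/q_i$ appearing in \eqref{def:height}, after which each statement reduces to a single invocation of the product formula. The only mild subtlety is the need for a nonzero coordinate in (ii), which is automatic from the definition of the weighted projective space.
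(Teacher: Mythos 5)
Your part (i) is exactly the paper's argument: factor $|\lambda|_v^{n_v}$ out of the local maximum and invoke the product formula. Your part (ii), however, takes a genuinely different and in fact cleaner route. The paper rescales $\p$ by $\lambda = (1/x_i)^{1/q_i}$ so that one coordinate becomes $1$, and then observes that every local factor is of the form $\max\{1,\dots\}\geq 1$; this has the mild awkwardness that the $q_i$-th root $\lambda$ need not lie in $k$, so strictly speaking one is passing to a finite extension (and relying on well-definedness and the behaviour of the height under extensions) without saying so. You instead bound the height from below by the single product $\prod_{v}|x_j|_v^{n_v/q_j} = \bigl(\prod_v |x_j|_v^{n_v}\bigr)^{1/q_j} = 1$ for any nonzero coordinate $x_j$, using only the product formula over $k$ itself and the monotonicity of products of nonnegative local bounds. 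This avoids any field extension and any choice of normalization, at the cost of nothing; it is the same trick that proves $H(\p)\geq 1$ for the Weil height, transplanted correctly to the weighted exponents. Both arguments are valid, but yours is the more self-contained of the two.
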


\proof  
i)  Let $\p=[x_0, \dots , x_n] \in \wP^n(k)$. Since $\p$ is a point in the weighted projective space, any other choice of homogenous coordinates for $\p$ has the form $[\lambda^{q_0} x_0, \dots, \lambda^{q_n} x_n]$, where  $ \lambda \in k^*$. Then
\[
\begin{split}
\wh_k\left([\lambda^{q_0} x_0, \dots, \lambda^{q_n} x_n]\right) &= \prod_{v \in M_k} \max_{0 \leq i \leq n}  \left\{\frac{}{} |\lambda^{q_i}  x_i|_v^{n_v/q_i}\right\} \\
& =  \prod_{v \in M_k} |\lambda |_v^{n_v} \max_{0 \leq i \leq n}  \left\{\frac{}{} | x_i|_v^{n_v/q_i}\right\}\\
& =\left(   \prod_{v \in M_k} |\lambda |_v^{n_v}\right)\cdot \left(  \prod_{v \in M_k} \max_{0 \leq i \leq n}  \left\{\frac{}{} |x_i|_v^{n_v/q_i}\right\} \right)\\
\end{split} 
\]
Applying the product formula we have
\[\wh_K\left([\lambda^q_0 x_0, \dots, \lambda^q_n x_n]\right) = \prod_{v \in M_K} \max_{0 \leq i \leq n} \left\{\frac{}{} |x_i|_v^{n_v/q_i}\right\}= \wh_K(\p)\]
This completes the proof of the first part. 

ii) For every point $\p \in \wP^n (k)$ we can find a representative $\p^\prime$  of $\p$ with weighted homogenous coordinates such that one of the coordinates is 1. Assume,  that $\p= [ x_0: \ldots: x_i: \dots:  x_n]$ such that $x_i\neq 0$.   Then take $\p^\prime = \lambda\star \p$, where $\lambda = \left( \frac 1 {x_i} \right)^{\frac 1 {q_i}}$  and 
$ \p^\prime =   [ y_0: \ldots: 1: \dots:  y_n]$,   where 
\[ 
y_j =   x_j \cdot  x_i^{- \frac {q_j} {q_i}}
\]
for $j=0, \dots, n$ and $j \neq i$. The  the height is
\[\begin{split} 
\wh_k (\p^\prime) &=   \prod_{v \in M_k} \max\left\{\frac{}{}|x_0|_v^{n_v/q_0} , \dots, |x_n|_v^{n_v/q_n} \right\} \\
& =   \prod_{v \in M_k}  \max\left\{\frac{}{} 1, |y_0|_v^{n_v/q_0} , \dots, |y_n|_v^{n_v/q_n}\right\}.
\end{split}
 \] 
Hence,  every factor in the product is at least 1.  Therefore, $\wh_K(P) \geq 1$. 

\qed

Let us see an example.

\begin{exa}
Consider the set of weights $\w = (2, 3, 5)$ and the point $\p =[7: 0: 0] \in \wP_\w (\Q)$.  Then, $\wgcd_\Z (\p) = 1$  and 
\[ 
\begin{split}
\wh_\Q (\p) & =\max \left\{ \sqrt{|7|_7} , \sqrt[3]{|0|_7 } , \sqrt[5]{|0|_7  }  \right\} \cdot \max \left\{  \sqrt{|7|_\infty }, \sqrt[3]{|0|_\infty }, \sqrt[5]{|0|_\infty  }   \right\} \\
&  = \max \left\{ \sqrt{ \frac 1 7}, 1, 1 \right\}  \cdot \max \left\{  \sqrt{7}, 1, 1\right\}  = \sqrt{7}   
\end{split}  
\]
Let us now consider $K=\Q (\sqrt{7})$. Then,  $ \wgcd_{\O_K} (\p) = \sqrt{7}$    and  over $K$ we have 
\[  \p = \frac 1 {\sqrt{7} } \star [7:0:0] = [1:0:0], \]
so $\wh_K (\p) = 1$.  
\qed
\end{exa}


From the \cref{height} we see that $\size_k $ ca be defined as 
\begin{equation} 
\size_k (\p) = \prod_{M_k^\infty}     \max   \left\{   \frac{}{}   |x_0|_v^{\frac {n_v} {q_0}} , \dots, |x_n|_v^{\frac {n_v} {q_n}} \right\}
\end{equation}
and 
\begin{equation}
\wh_k ( \p ) = \size_k (\p) \cdot \prod_{v \in M_k^0} \max   \left\{   \frac{}{}   |x_0|_v^{\frac {n_v} {q_0}} , \dots, |x_n|_v^{\frac {n_v} {q_n}} \right\}
\end{equation}


Let $\w$, $k$ be as above and $\p \in \wP^n(k)$ such that $p=[x_0 : x_1 : \ldots : x_n]$. Denote by $K = k (\awgcd (\p))$.  Then,  over $K$, the weighted greatest common divisor is the same as the absolute greatest common divisor,
\[ \wgcd_K (\p) = \awgcd_K (\p).\]
Moreover, $[K:k] < \infty$ and we have the following. 

 
\begin{prop}\label{prop-5}
Let $\w$, $K=\Q(\awgcd (\p) )$,  and $\p \in \wP^n(K)$, say $\p=[x_0 : x_1 : \ldots : x_n]$.   
Then the following are true:

i)  If       $\p$ is normalized in $K$, then
\begin{equation}
\wh_K (\p)=  \wh_\infty (\p) = \max_{0 \leq i \leq n}\left\{\frac{}{}|x_i|^{{n_{\nu}}/q_i}_\infty \right\}.
\end{equation}
%

ii) If     $L/K$ is a finite extension,  then 
\begin{equation}
\wh_L(\p)=  \wh_K (\p)^{[L:K]}.
\end{equation}

\end{prop}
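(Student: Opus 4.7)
The plan is to prove the two parts separately. Part (i) requires showing that the non-archimedean factors in $\wh_K(\p)$ are trivial once $\p$ is normalized in $K$; for this, I would split the product defining the height into its non-archimedean and archimedean contributions:
\[
\wh_K(\p) = \prod_{v \in M_K^0} \max_i |x_i|_v^{n_v/q_i} \,\cdot\, \prod_{v \in M_K^\infty} \max_i |x_i|_v^{n_v/q_i}.
\]
Since $K = \Q(\awgcd(\p))$, over $K$ the weighted and absolute weighted gcds coincide, so normalization in $K$ means $\wgcd_K(\p) = \awgcd_K(\p) = 1$. For each finite place $v \in M_K^0$ with associated prime $\p_v \subset \O_K$, I would translate this normalization condition via the valuation-theoretic formula for $\wgcd_K$ established earlier to obtain $\min_i \nu_v(x_i)/q_i = 0$, i.e.\ some coordinate has $\nu_v(x_i) = 0$. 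This forces $\max_i |x_i|_v^{n_v/q_i} = 1$, so the non-archimedean product collapses to $1$ and only the archimedean factor remains, yielding the claimed formula.

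For part (ii), the plan is to partition $M_L = \bigsqcup_{v \in M_K} \{w \in M_L : w \mid v\}$ and compute. For $w \mid v$ and $x_i \in K$, two facts are used: $|x_i|_w = |x_i|_v$ as extensions of the same absolute value, and $n_w = [L_w : K_v] \cdot n_v$. Because positive-exponent maps commute with maxima, $\max_i |x_i|_w^{n_w/q_i} = (\max_i |x_i|_v^{n_v/q_i})^{[L_w:K_v]}$. Taking the product over $w \mid v$ and invoking the degree formula $\sum_{w \mid v} [L_w : K_v] = [L:K]$ yields $(\max_i |x_i|_v^{n_v/q_i})^{[L:K]}$; multiplying over $v \in M_K$ then gives $\wh_L(\p) = \wh_K(\p)^{[L:K]}$.

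The main obstacle is the non-archimedean step in part (i): one must argue precisely why normalization over $K = \Q(\awgcd(\p))$ suffices to kill every finite-place factor. The delicate point is passing from $\min_i \lfloor \nu_v(x_i)/q_i \rfloor = 0$ (the immediate consequence of $\wgcd_K(\p) = 1$) to the stronger $\min_i \nu_v(x_i)/q_i = 0$ needed by the height formula; this is precisely where the choice of $K$ containing the roots implicit in the absolute gcd plays its essential role, ensuring that the normalization has no leftover fractional part in the valuations. Part (ii), by contrast, is a routine consequence of the way local degrees behave in a finite extension.
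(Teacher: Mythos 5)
Your part (ii) is correct and coincides with the paper's own computation: partition $M_L$ over the places $v\in M_K$, use $|x_i|_w=|x_i|_v$ and $n_w=[L_w:K_v]\,n_v$ for $w\mid v$, pull the exponent out of the maximum, and apply the degree formula. Nothing is missing there.

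For part (i) you have correctly isolated the one step that matters, but the resolution you propose does not work, and this is a genuine gap (one the paper's own proof hides behind the phrase ``the non-Archimedean absolute values give no contribution''). Normalization over $K=\Q(\awgcd(\p))$ gives $\min_i \lfloor \nu_v(x_i)/\bar q_i\rfloor = 0$ at every finite place $v$, with $\bar q_i=q_i/\gcd(q_0,\dots,q_n)$; it does not give $\min_i \nu_v(x_i)/q_i=0$, and adjoining $\awgcd(\p)$ does not close that difference. Concretely, take $\w=(2,3)$ and $\p=[p:p]$ for a rational prime $p$. Then $\wgcd(\p)=\awgcd(\p)=1$, so $K=\Q$ and $\p$ is normalized (indeed absolutely normalized) over $K$; yet no representative of $\p$ has a $p$-adic unit coordinate, since $\lambda\star(p,p)=(\lambda^{2}p,\lambda^{3}p)$ and the valuations $2k+1$, $3k+1$ cannot both vanish. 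The place at $p$ contributes $\max\{p^{-1/2},p^{-1/3}\}=p^{-1/3}\neq 1$, so
\[
\wh_\Q(\p)=p^{-1/3}\cdot p^{1/2}=p^{1/6}\neq p^{1/2}=\max_i |x_i|_\infty^{1/q_i}.
\]
Thus the asserted identity fails as stated unless one adds the hypothesis that at each finite place some coordinate is a $v$-adic unit, i.e.\ $\min_i\nu_v(x_i)=0$ rather than merely $\min_i\lfloor\nu_v(x_i)/q_i\rfloor=0$. Your instinct that the floor function is the crux is exactly right; the missing realization is that no choice of base field repairs it, because the obstruction lives in the residues of $\nu_v(x_i)$ modulo $q_i$, which the $\star$-action can only shift by multiples of $q_i$.
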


\begin{proof}   Let $\p=[x_0, \dots , x_n] \in \wP^n(K)$.  Then, $\p$ will have a representative $[y_0, \dots, y_n]$ such that $y_i \in \O_K$ for all $i=0, \ldots, n$ and $\wgcd (y_0, \dots, y_n)=1$.  With such representative for the coordinates of $\p$, the non-Archimedean absolute values give no contribution to the height, and we obtain
\[\wh_K (\p)=  \max_{0 \leq j \leq n}\left\{\frac{}{}|x_j|^{n_\nu /q_j}_\infty \right\}\]

ii) Let $L$ be a finite extension of $k$ and $M_L$ the corresponding  set of absolute values. Then,  
\[\begin{split}
\wh_L(\p) & = \prod_{w \in M_L} \max_{0 \leq i \leq n}\left\{\frac{}{} |x_i|_w^{n_w/q_i} \right\}  =  \prod_{v \in M_k} \prod_{\substack{w \in M_L\\  w|v}}   \max_{0 \leq i \leq n}\left\{\frac{}{} |x_i|_v^{n_w/q_i} \right\}, \; \;   \text{(since $x_i \in k$)} \\
& = \prod_{v \in M_k}   \max_{0 \leq i \leq n}\left\{\frac{}{} |x_i|_v^{\frac{n_v \cdot [L:K]}{q_i} }\right \},\qquad \text{(degree formula)}\\
&= \prod_{v \in M_k}   \max_{0 \leq i \leq n}\left\{\frac{}{} |x_i|_v^{ n_v/q_i} \right\}^{[L:k]}=\wh_k(\p)^{[L:k]}
\end{split} \]
This completes the proof. 
\end{proof}


\begin{cor}
If $\p$ is absolutely normalized over a number field $k$ then 
\[ \wh_k (\p) = \wh_\infty (\p) = \size_k (\p). \]
\end{cor}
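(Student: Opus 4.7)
My strategy is to derive this corollary directly from \cref{prop-5} by unpacking the archimedean/non-archimedean splitting
\[\wh_k(\p) \;=\; \size_k(\p) \cdot \prod_{v \in M_k^0} \max_i |x_i|_v^{n_v/q_i}\]
recorded just before the statement. The second identity $\wh_\infty(\p) = \size_k(\p)$ in the corollary is essentially definitional, since both denote the archimedean factor. Thus the real content is the first identity $\wh_k(\p) = \size_k(\p)$, which amounts to showing that the non-archimedean factor equals $1$.

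First I would observe that absolute normalization forces ordinary normalization. Because any integer $d$ with $d^{q_i}\mid x_i$ is \emph{a fortiori} a positive real with the same property, we have $1 \leq \wgcd(\p) \leq \awgcd(\p)$; so $\awgcd(\p) = 1$ forces $\wgcd(\p) = 1$. Moreover $\awgcd(\p) = 1 \in \Q$, so the field of absolute normalization $\Q(\awgcd(\p))$ is simply $\Q \subseteq k$, placing us squarely in the setup of \cref{prop-5}.

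Next I would apply \cref{prop-5}(i) at the base field $\Q$: the normalized representative has $\wgcd = 1$, and by that proposition the non-archimedean local factors cancel out of the product, giving $\wh_\Q(\p) = \max_i |x_i|_\infty^{1/q_i}$. Invoking \cref{prop-5}(ii) then yields $\wh_k(\p) = \wh_\Q(\p)^{[k:\Q]}$. On the other hand, since the coordinates $x_i$ lie in $\Q$, every archimedean absolute value on $k$ restricts to the usual one on $\Q$; combined with the degree identity $\sum_{v\in M_k^\infty} n_v = [k:\Q]$ this shows that $\size_k(\p) = \bigl(\max_i |x_i|_\infty^{1/q_i}\bigr)^{[k:\Q]}$ as well. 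Comparing the two formulas delivers $\wh_k(\p) = \size_k(\p)$.

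The subtle point I expect to wrestle with is the edge case in which $\p$ genuinely has coordinates in $\O_k \setminus \Z$, a situation not logically excluded by $\awgcd_k(\p) = 1$ alone. In that case one cannot descend to $\Q$ and must apply \cref{prop-5}(i) directly over $k$: the assertion is precisely that for any normalized integral representative the non-archimedean local factors are trivial, which is exactly what is required. Granting \cref{prop-5} in this form, the corollary follows immediately from the splitting of $\wh_k(\p)$ into its archimedean and non-archimedean pieces.
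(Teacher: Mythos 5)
Your argument is correct and matches the paper's (unwritten) justification: the corollary is stated as an immediate consequence of \cref{prop-5}(i) together with the archimedean/non-archimedean factorization of $\wh_k$, which is exactly the content of your final paragraph. The detour through $\Q$ in your middle paragraphs is superfluous (and, as you yourself note, only licensed when the coordinates are rational); applying \cref{prop-5}(i) directly over $k$ to the absolutely normalized representative already makes every non-archimedean factor equal to $1$ and yields all three equalities at once.
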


The next example illustrates the previous Proposition.

\begin{exa}
Let $\w=(2,4)$ and $\p = [5\cdot 3, 5^2 \cdot 7]$. Then $\wgcd( 5\cdot 3, 5^2 \cdot 7  )=1$.  The height $\wh_\Q (\p)$ is
\[
\begin{split}
 \wh_\Q (\p) &= \max \left\{  \left(  \frac 1 5\right)^{\frac 1 2 }, \left(  \frac 1 {25} \right)^{\frac 1 4 }     \right\}  
\cdot 
\max \left\{  \left(  \frac 1 3\right)^{\frac 1 2 }, 1    \right\} 
\cdot
\max \left\{  1 , \left(  \frac 1 {7} \right)^{\frac 1 4 }     \right\}   \cdot  \left( 5 \cdot 3   \right)^{\frac 1 2 } \\
  & = \left(  \frac 1 5\right)^{\frac 1 2 } 
  \cdot \left( 5 \cdot 3   \right)^{\frac 1 2 } = \sqrt{3}.
\end{split}  
\]
The absolute weighted greatest common divisor is 
\[ \awgcd (  5\cdot 3, 5^2 \cdot 7 ) = \sqrt{5}.
\] 
Let $K=\Q (\sqrt{5})$ and compute $\wh_K (\p)$.  Over $K$ the point $\p$ is $\p = [ 3 :  7]$. Then
\[ 
\wh_K (\p) = \max \left\{ |3 |_\infty^{2/2},  |7|_\infty^{2/4}   \right\}  =  \max \{ 3, \sqrt{7} \} =3, 
\]
as expected from \cref{prop-5}, ii). 
\qed
\end{exa}

Let $q=q_0 q_1 \cdots q_n$ and consider the map 
\begin{equation}\label{map}
\begin{split}
\phi : \quad & \wP^n (k) \to \P^n (k) \\
& [x_0, \ldots , x_n] \to \left[  x_0^{\frac q {q_0}}, \ldots , x_n^{\frac q {q_n}}    \right] \\
\end{split}
\end{equation}

\begin{lem} \label{wh-H} 
Given $\phi$ and $q$ satisfying the above conditions we have 

i) $\phi$ is well-defined

ii) $\wh_\Q (\p) = H_\Q (\phi (\p) )^{ \frac 1 q }$

\end{lem}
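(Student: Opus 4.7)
My plan is to prove both assertions by a direct unwinding of the definitions, relying on a single analytic observation: for a positive integer exponent $q$ the map $t \mapsto t^q$ is monotone on $[0,\infty)$ and therefore commutes with a finite maximum.

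For (i), I pick a second set of homogeneous coordinates for $\p \in \wP^n(k)$, which by the definition of the weighted action has the form $(\lambda^{q_0} x_0,\ldots,\lambda^{q_n} x_n)$ for some $\lambda \in k^\star$. Because $q = q_0 q_1 \cdots q_n$, each exponent $q/q_i$ is a positive integer, so $\phi$ applied to this representative equals
\[
\bigl((\lambda^{q_0} x_0)^{q/q_0},\ldots,(\lambda^{q_n} x_n)^{q/q_n}\bigr) = \bigl(\lambda^q x_0^{q/q_0},\ldots,\lambda^q x_n^{q/q_n}\bigr).
\]
Since the factor $\lambda^q$ scales every coordinate uniformly, this is the same point of $\P^n(k)$ as $\phi(\p) = [x_0^{q/q_0} : \cdots : x_n^{q/q_n}]$, so $\phi$ is well defined.

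For (ii), I would start from the observation that $n_v = 1$ for every $v \in M_\Q$, so the Weil height of the image is
\[
H_\Q(\phi(\p)) = \prod_{v \in M_\Q} \max_{0 \leq i \leq n} \bigl|x_i^{q/q_i}\bigr|_v = \prod_{v \in M_\Q} \max_{0 \leq i \leq n} |x_i|_v^{q/q_i}.
\]
Pulling the common exponent $q$ out of the max via monotonicity gives
\[
\max_{0 \leq i \leq n} |x_i|_v^{q/q_i} = \Bigl(\max_{0 \leq i \leq n} |x_i|_v^{1/q_i}\Bigr)^q,
\]
and multiplying over $v \in M_\Q$ produces $H_\Q(\phi(\p)) = \wh_\Q(\p)^q$, which is equivalent to the claimed identity after extracting the $q$-th root.

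There is no genuine obstacle here; the only care needed is to verify that the $q/q_i$ are positive integers (so $\phi$ is honestly a morphism and no branch-of-root issues arise) and that all quantities inside the max are nonnegative real numbers so that the monotonicity step is valid. I expect the same computation to generalize verbatim to an arbitrary number field $k$, yielding $H_k(\phi(\p)) = \wh_k(\p)^q$, with the local-degree factors $n_v$ in \cref{def:height} passing through the argument unchanged.
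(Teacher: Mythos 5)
Your proof is correct and follows essentially the same route as the paper's: part (i) is the identical computation showing $\phi(\lambda\star\p)=[\lambda^q x_0^{q/q_0}:\cdots:\lambda^q x_n^{q/q_n}]=\phi(\p)$, and part (ii) is the same term-by-term unwinding of the two height definitions, with your explicit appeal to monotonicity of $t\mapsto t^q$ merely making precise the step the paper performs silently when it pulls the $1/q$ inside the product and the maximum. Your closing remark that the identity holds over any number field $k$ with the $n_v$ factors carried along matches the paper, whose displayed computation is in fact already written over general $k$.
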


\proof
Let $\x= [x_0, \dots , x_n]$ and $\y=[ y_0, \dots , y_n]$ be points in $\wP^n$ such that $\x = \lambda \star \y$.  Then, $\x= [ \lambda^{q_0} y_0, \ldots , \lambda^{q_n} y_n]$ and 
\[
\phi (\x) = \left[  \lambda^q \,  y_0^{\frac q {q_0} }, \ldots , \lambda^q \,  y_n^{ \frac q {q_n}  }    \right] = \phi (\y).
\]
Let $\p = [x_0, \dots , x_n]$ and $\bar \p = \phi(\p)$.  For the second part, by definition we have that
\[ \wh (\p)  =   \prod_{\nu\in M_k} \max \left\{  |x_i|_\nu^{\frac {n_v} {q_i}} \right\}  \]
and 
\[ H (\phi (\p)) =   \prod_{\nu\in M_k} \max \left\{  \left|x_i^{\frac{q}{q_i}}\right|_\nu^{n_v} \right\}.   \]
%
Then, we have the following
\[ \begin{split}
\left(\frac{}{} H (\phi (\p))\right)^{1/q} & = \left(   \prod_{\nu\in M_k} \max \left\{  \left|x_i^{\frac{q}{q_i}}\right|_\nu^{n_v} \right\} \right)^{1/q}
 = \prod_{\nu\in M_k} \max \left\{  \left|x_i^{\frac{1}{q_i}}\right|_\nu^{n_v} \right\}
 =  \wh (\p). 
 \end{split} \]

Therefore, we get $\wh_\Q (\p)^ q = H_\Q (\phi (\p) )$. 
\qed

\begin{cor}
The following holds for logarithmic heights
\[ q \log \wh (\p) = \log H (\phi (\p). \]
\end{cor}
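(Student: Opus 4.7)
The plan is essentially trivial: this corollary is an immediate consequence of \cref{wh-H}(ii), obtained by taking logarithms. First I would start from the identity $\wh_\Q (\p) = H_\Q (\phi (\p))^{1/q}$ established in the preceding lemma. Applying $\log$ to both sides, and using that $\log$ converts powers into products (so $\log(a^{1/q}) = \tfrac{1}{q} \log a$), yields
\[
\log \wh(\p) = \frac{1}{q} \log H(\phi(\p)).
\]
Multiplying both sides by $q$ gives the desired equality $q \log \wh(\p) = \log H(\phi(\p))$.

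Since the substantive content is all packed into \cref{wh-H}, there is no real obstacle here; the corollary is just a logarithmic reformulation of the multiplicative relationship between the weighted height on $\wP^n$ and the standard Weil height on $\P^n$ pulled back along $\phi$. The only thing worth pointing out is that the identity makes sense because $\wh(\p) \geq 1$ and $H(\phi(\p)) \geq 1$, so the logarithms are nonnegative and the exponent $1/q$ may be pulled out without any issue of branch choice.
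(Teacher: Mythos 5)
Your proposal is correct and matches the paper's intent exactly: the corollary is stated immediately after \cref{wh-H} with no separate argument, precisely because it is the logarithmic restatement of $\wh_\Q(\p) = H_\Q(\phi(\p))^{1/q}$ obtained by applying $\log$ and multiplying by $q$. Your added remark that $\wh(\p) \geq 1$ and $H(\phi(\p)) \geq 1$ keeps the logarithms well defined is a sensible (if minor) extra precaution.
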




Using \cref{prop-5}, part ii),  we can define the height on $\wP^n(\overline \Q)$. The height of a point on $\wP^n(\overline \Q)$ is called the \textbf{absolute (multiplicative) weighted height} and is the function 
\[
\begin{split}
\awh: \wP^n(\bar \Q) & \to [1, \infty)\\
\awh(\p)&=\wh_K(\p)^{1/[K:\Q]},
\end{split}
\]
where  $\p \in \wP^n(K)$, for any $K$ which contains $\Q (\awgcd (\p))$.  The \textbf{absolute (logarithmic) weighted height} on $\wP^n(\overline \Q)$  is the function 
\[
\begin{split}
\awh^\prime: \wP^n(\bar \Q) & \to [0, \infty)\\
\awh^\prime (\p)&= \log \, \wh (\p)= \frac 1 {[K:\Q]} \awh_K(\p).
\end{split}
\]

\begin{lem}\label{lem_galois_conj}
The height is invariant under Galois conjugation. In other words, for  $\p \in \wP^n(\overline \Q)$ and $\sigma \in G_{ \Q}$ we have $\wh (\p^\sigma) = \wh (\p)$. 
\end{lem}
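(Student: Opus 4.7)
The plan is to adapt the classical argument for Galois-invariance of Weil heights: an element $\sigma\in G_\Q$ induces a permutation of the places of any finite Galois extension containing the coordinates of $\p$, this permutation preserves the local degrees $n_v$, and the defining product for the weighted height is invariant under re-indexing. The weights $q_i$ enter the argument only inside the max, so they are spectators.

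First, I choose a finite Galois extension $K/\Q$ containing the coordinates $x_0,\dots,x_n$ of $\p$. Then $\sigma$ restricts to an element of $\Gal(K/\Q)$, both $\p$ and $\p^\sigma$ lie in $\wP^n(K)$, and by \cref{prop-5} it is enough to prove $\wh_K(\p^\sigma)=\wh_K(\p)$; the statement for the absolute height $\awh$ on $\wP^n(\bar \Q)$ then follows by extracting $[K:\Q]$-th roots.

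Second, I invoke the standard action of $\sigma$ on places: for $v\in M_K$, set $|y|_{v\circ\sigma}:=|\sigma(y)|_v$ for all $y\in K$. Then $v\mapsto v\circ\sigma$ is a bijection of $M_K$, and $\sigma$ extends to a topological isomorphism of completions $K_v\to K_{v\circ\sigma^{-1}}$ over the corresponding local field $\Q_p$, giving $n_v=n_{v\circ\sigma^{-1}}$. Substituting into the definition,
\[
\wh_K(\p^\sigma)=\prod_{v\in M_K}\max_{0\le i\le n}\bigl\{|\sigma(x_i)|_v^{n_v/q_i}\bigr\}=\prod_{v\in M_K}\max_{0\le i\le n}\bigl\{|x_i|_{v\circ\sigma}^{n_v/q_i}\bigr\},
\]
and re-indexing $w=v\circ\sigma$ (so that $n_v=n_w$) converts this into $\prod_{w\in M_K}\max_i\bigl\{|x_i|_w^{n_w/q_i}\bigr\}=\wh_K(\p)$, as required.

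The only non-trivial input is the preservation of local degrees under the Galois permutation of places, which is a standard fact from algebraic number theory and is precisely the ingredient that makes the ordinary Weil height Galois-invariant. Since the exponents $1/q_i$ do not interact with this bookkeeping, the proof transports unchanged to the weighted setting; no new obstacle appears beyond the one already present in the classical case.
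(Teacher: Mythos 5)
Your proof is correct and follows essentially the same route as the paper's: both arguments use the Galois action on the set of places, the fact that $\sigma$ induces isomorphisms of completions preserving the local degrees $n_v$, and a re-indexing of the defining product, with the weights $q_i$ playing no active role. The only cosmetic difference is that you work entirely inside a fixed Galois extension $K$ (so that $\sigma$ permutes $M_K$), whereas the paper phrases the same bookkeeping as an identification $M_K \to M_{K^\sigma}$.
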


\proof   Let $\p =[x_0, \dots, x_n] \in \wP^n(\overline \Q)$. Let $K$ be a finite Galois extension of $\Q$ such that $\p \in \wP^n(K)$. Let $\sigma \in G_\Q$. Then $\sigma$ gives an isomorphism 
\[\sigma: K \to K^\sigma\]
and also identifies the sets $M_K$, and $M_{K^\sigma}$ as follows
\[
\begin{split}
\sigma: M_K &\to M_{K^\sigma}\\
v &\to v^\sigma
\end{split}
\]
Hence, for every $x \in K$ and $v \in M_K$, we have $|x^\sigma|_{v^\sigma} = |x|_v$.   Obviously $\sigma$ gives as well an isomorphism 
\[\sigma: K_v \to K^\sigma_{v^\sigma}\]
Therefore $n_v= n_{v^\sigma}$, where $n_{v^\sigma} = [ K^\sigma_{v^\sigma}: \Q_v]$. Then 
\[
\begin{split}
\wh_{K^\sigma}(P^\sigma) &= \prod_{w \in M_{K^\sigma}} \max_{0 \leq i \leq n} \left\{\frac{}{}  |x_i^\sigma|_{w}^{n_{w}/q_i}\right\}\\
&=  \prod_{v \in M_{K}} \max_{0 \leq i \leq n} \left\{\frac{}{}  |x_i^\sigma|_{v^\sigma}^{n_{v^\sigma}/q_i}\right\}=  \prod_{v \in M_{K}} \max_{0 \leq i \leq n} \left\{\frac{}{}  |x_i|_v^{n_v/q_i}\right\}=\wh_K(\p)
\end{split}
\]
This completes the proof.
\qed

Given $\p \in \wP^n (K)$ as $\p = [x_0, \dots , x_n]$ the \textbf{field of definition} of $\p$ is defined as 
\[ \Q (\p) := \Q \left(      
\left( \frac {x_0} {x_i} \right)^{\frac {q_0} {q}}, \ldots , 1, \ldots ,   \left( \frac {x_n} {x_i} \right)^{\frac {q_n} {q}}
 \right)
\]
Notice that $\Q (\p)$ is the field containing all the liftings of the field $\Q (\phi (\p))$.  In other words adjoining al the $q$-roots to the minimal field of definition $\Q (\phi (\p))$ of $\phi (\p) \in \P^n$.


\begin{lem} 
For any point $\p \in \wP_\w^n (\overline \Q)$, we have 
\[ 
[ \Q (\p) : \Q] \leq  q \cdot [\Q(\phi (\p) ) : \Q ]
\]
\end{lem}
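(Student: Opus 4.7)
The plan is to reduce the statement to the relative bound $[\Q(\p):F] \leq q$, where $F = \Q(\phi(\p))$; since $\phi(\p)$ is determined by $\p$ one has $F \subseteq \Q(\p)$, so multiplicativity of degrees together with this relative bound yields the claim.

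First I would fix a representative $(\bar y_0, \ldots, \bar y_n) \in F^{n+1}$ of $\phi(\p)$ (which exists by the definition of $F$) and pick an index $i$ with $\bar y_i \neq 0$. Let $\alpha \in \overline{\Q}$ be a $q$-th root of $\bar y_i$, so that $\alpha$ is a root of $T^q - \bar y_i \in F[T]$ and $[F(\alpha):F] \leq q$. The strategy is to show $\Q(\p) \subseteq F(\alpha)$, which immediately gives $[\Q(\p):F] \leq q$ and hence the desired inequality.

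To construct a representative of $\p$ inside $F(\alpha)$ I would use the scaling freedom in $\wP_{\w}^n$ to select $x_i = \alpha^{q_i}$, so that $x_i^{q/q_i} = \alpha^q = \bar y_i$ matches the chosen representative of $\phi(\p)$. For $j \neq i$, the equations $x_j^{q/q_j} = \bar y_j$ together with $x_i^{q/q_i} = \bar y_i$ force $x_j$ to be expressible through $\alpha$ and the ratio $\bar y_j/\bar y_i \in F$. Substituting these expressions into the generators $\gamma_j = (x_j/x_i)^{q_j/q}$ of $\Q(\p)$ from the definition just preceding the lemma would then place each $\gamma_j$ inside $F(\alpha)$, giving $\Q(\p) = F(\gamma_0, \ldots, \gamma_n) \subseteq F(\alpha)$.

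The principal obstacle is making precise the claim that, once $x_i = \alpha^{q_i}$ is fixed, each remaining coordinate $x_j$ lies in $F(\alpha)$. A priori each $x_j$ is only a $(q/q_j)$-th root of $\bar y_j$, and naively the various choices of such roots could produce a compositum of extensions of degree as large as $\prod_{j \neq i} q/q_j$ over $F$. The essential content -- captured by the paper's remark that $\Q(\p)$ is obtained from $\Q(\phi(\p))$ by adjoining ``all the $q$-th roots'' -- is that fixing the single $q$-th root $\alpha$ of $\bar y_i$ pins down all the other $x_j$'s compatibly inside $F(\alpha)$, so that only one degree-$q$ extension of $F$ is required to define $\p$; verifying this compatibility via the identity $x_j^{q/q_j}/x_i^{q/q_i} = \bar y_j/\bar y_i \in F$ is the heart of the argument.
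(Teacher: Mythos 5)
There is a genuine gap, and it sits precisely at the step you yourself label ``the heart of the argument.'' From $x_i^{q/q_i}=\bar y_i$ and $x_j^{q/q_j}=\bar y_j$ you only learn that each generator of $\Q(\p)$ is \emph{some} root of $T^{q}-u_j$ with $u_j=\bar y_j/\bar y_i\in F$; nothing in the identity $x_j^{q/q_j}/x_i^{q/q_i}=u_j$ ties the choice of that root to your single element $\alpha$, and adjoining one such root for each $j$ generically yields a compositum of degree up to $q^{n}$ over $F$, not $q$. In fact the claimed containment $\Q(\p)\subseteq F(\alpha)$ is false. Take $\w=(1,2,2)$, so $q=4$, and $\p=[1:\sqrt{2}:\sqrt{3}]\in\wP_\w^{2}(\overline{\Q})$. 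Then $\phi(\p)=[1:2:3]$ and $F=\Q$. With $i=0$ one has $\bar y_0=1$, so $F(\alpha)\subseteq\Q(i)$, yet after any admissible rescaling fixing $x_0=\alpha^{q_0}$ the remaining coordinates are $\pm\sqrt{2}$ and $\pm\sqrt{3}$, which do not lie in $\Q(i)$; the generators of $\Q(\p)$ in the sense of the definition preceding the lemma are $2^{1/4}$ and $3^{1/4}$, which generate a field of degree $16>q\cdot[F:\Q]=4$. Choosing $i=1$ or $i=2$ does not help (e.g.\ $\sqrt{3}\notin\Q(2^{1/4})$, whose only quadratic subfield is $\Q(\sqrt{2})$). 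Since $\Q(\sqrt{2},\sqrt{3})$ is a totally real biquadratic field, it is not contained in $F(\alpha)$ for any $\alpha$ with $\alpha^{q}\in F$, so the reduction ``$[\Q(\p):F]\leq q$ because $\Q(\p)\subseteq F(\alpha)$'' cannot be carried out.

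For comparison, the paper's own proof is a single sentence asserting that one ``adjoins at most a $q$-th root of unity'' for every coordinate --- a roots-of-unity heuristic rather than your single-radical one --- and it does not resolve the multiplicity issue either: taken literally, one extension per coordinate gives only the bound $q^{n}\cdot[F:\Q]$, and the ambiguities are not mere roots of unity in the example above. So the difficulty you correctly flagged is real, it is not closed by your argument or by the paper's, and (as the example shows) it is entangled with the fact that the paper's definition of $\Q(\p)$ depends on the chosen representative of $\p$.
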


\proof
The proof follows from the fact that for every coordinate we have to possibly  adjoin at most a $q$-th root of unity. 

\qed

The following result is  analogue to  Northcott's theorem for weighted projective spaces.

\begin{thm} \label{thm_finite}
Let $c_0$ and $d_0$ be constants and $\wP_w^n(\overline \Q)$ the weighted projective space with weights  $\w = (q_0, \dots, q_n)$. Then the set 
\[
\{\p \in \wP_w^n(\overline \Q): \wh_\Q (\p) \leq c_0 \text{ and } [\Q(\p):\Q] \leq d_0\}
\]
contains only finitely many points. 
\end{thm}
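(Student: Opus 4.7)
The plan is to reduce the statement to the classical Northcott theorem for $\P^n$ by means of the map $\phi\colon\wP^n_\w(\overline\Q)\to\P^n(\overline\Q)$ introduced in \cref{wh-H}, sending $[x_0:\cdots:x_n]$ to $[x_0^{q/q_0}:\cdots:x_n^{q/q_n}]$, with $q=q_0\cdots q_n$. The two inputs to exploit are that $\phi$ converts the weighted height into the Weil height via $H(\phi(\p))=\wh(\p)^q$, and that $\phi$ is algebraic and finite-to-one.

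First, for $\p$ in the set in question, \cref{wh-H} yields $H(\phi(\p))=\wh(\p)^q\le c_0^q$. Moreover, each coordinate of a representative of $\phi(\p)$ is a monomial in the coordinates of the corresponding representative of $\p$, so $\Q(\phi(\p))\subseteq\Q(\p)$ and hence $[\Q(\phi(\p)):\Q]\le d_0$. Applying the classical Northcott theorem on $\P^n(\overline\Q)$ then shows that $\phi$ sends our set into a finite subset of $\P^n(\overline\Q)$.

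Second, I would show that every fiber of $\phi$ is finite. If $\phi(\p)=[y_0:\cdots:y_n]$, then for suitable affine representatives one has $x_i^{q/q_i}=\mu y_i$ for a single scalar $\mu\in\overline\Q^{\ast}$. Choosing a $q$-th root $\lambda$ of $\mu$ and absorbing $\lambda$ into the $\star$-action on the source reduces $\p$ to the form $[\zeta_0 y_0^{q_0/q}:\cdots:\zeta_n y_n^{q_n/q}]$ for some roots of unity $\zeta_i$ satisfying $\zeta_i^{q/q_i}=1$; the number of such tuples is bounded by $\prod_{i=0}^n (q/q_i)$. Combining with the previous paragraph, the set of $\p$ under consideration embeds into a finite union of finite fibers and is therefore finite.

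The main obstacle I anticipate lies in the bookkeeping of the fiber count: one must handle the $\star$-equivalence on $\wP^n_\w$ together with the choice of a $q$-th root $\lambda$ of $\mu$ so as to neither under- nor over-count preimages, and verify that the resulting representatives really are $\star$-inequivalent exactly when the roots of unity $(\zeta_0,\ldots,\zeta_n)$ differ by an appropriate diagonal scaling coming from a $q$-th root of unity. Once this normalization issue is settled, the reduction via $\phi$ together with the classical Northcott theorem and the finite-fiber bound immediately yields the claimed finiteness.
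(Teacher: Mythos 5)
Your proposal follows essentially the same route as the paper: reduce to the classical Northcott theorem via the map $\phi$ of \cref{wh-H}, using $H(\phi(\p))=\wh(\p)^q$ to bound the Weil height and the finiteness of the fibers of $\phi$ to conclude. If anything, your version is slightly more careful than the paper's — you bound $[\Q(\phi(\p)):\Q]\le[\Q(\p):\Q]$ directly via $\Q(\phi(\p))\subseteq\Q(\p)$, which is the inequality actually needed (the paper invokes the reverse-direction bound $[\Q(\p):\Q]\le q\,[\Q(\phi(\p)):\Q]$), and you make the fiber count explicit with the roots-of-unity argument, whereas the paper simply asserts that $\phi$ has finite degree.
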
 

\proof The proof is a direct consequence of Northcott's theorem for projective spaces and \cref{wh-H}.  Let $q = q_0 q_1 \cdots q_n$ and consider the map 
$\phi :  \wP^n (k) \to \P^n (k)$ as defined in \cref{map}.   
From Northcott's theorem for projective spaces we have that if   $C_0=c_0^q$ and $D_0 = \frac 1 q d_0$ are  constants and $\P^n(\overline \Q)$ a projective space, then the set 
\[
\{ \phi (\p) \in \P^n(\overline \Q): H (\phi (\p )) \leq C_0 \text{ and } [ \Q (\phi (\p)) :\Q] \leq D_0\},
\]
contains only finitely many points  $\phi (\p)$.   
From  \cref{wh-H} we have that  
\[ 
\wh_\Q (\p) = H_\Q (\phi (\p) )^{ \frac 1 q } \leq C_0^{\frac 1 q} = c_0.
\] 
Also, 
\[ [ \Q (\p) : \Q] \leq  q \cdot [\Q(\phi (\p) ) : \Q ] \leq q \cdot D_0 = d_0. 
\]
Since $\phi$ is a finite degree map, then are only finitely many points $\p \in \wP_\w^n (\overline \Q)$ satisfying the above conditions.  This completes the proof. 

\qed

The following theorem is a more practical result especially from the computational point of view. 

\begin{cor}
There are finitely many absolutely normalized points $\p \in \wP_\w^n (\overline \Q)$ of bounded height.  In other words, 
%
\[\{\p \in \wP_w^n( \overline \Q): \wh_{\bar \Q} (\p) \leq c_0 \} \]
is a finite set for any constant $c_0$.

\end{cor}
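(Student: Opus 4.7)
The plan is to reduce directly to a finiteness statement about bounded-height integer tuples, using the simplification that \cref{prop-5}(i) provides for absolutely normalized points. First, I would observe that if $\p = [x_0 : \cdots : x_n] \in \wP_\w^n(\overline\Q)$ is absolutely normalized, then $\awgcd(\p) = 1 \in \Q$, so the field of absolute normalization $\Q(\awgcd(\p))$ is $\Q$ itself, and a representative with coordinates in $\Z$ may be chosen. By \cref{unique}, this representative is unique up to multiplication by a $q$-th root of unity, where $q = \gcd(q_0,\ldots,q_n)$.

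Next, I would apply \cref{prop-5}(i) with $K = \Q$: since the coordinates are integers with $\awgcd = 1$, the non-archimedean places contribute trivially and
\[
\wh_{\overline\Q}(\p) = \wh_\Q(\p) = \max_{0 \le i \le n} |x_i|_\infty^{1/q_i}.
\]
The hypothesis $\wh_{\overline\Q}(\p) \le c_0$ then forces $|x_i| \le c_0^{q_i}$ for each $i$, leaving only finitely many integer tuples to consider; quotienting by the finite action of the $q$-th roots of unity yields only finitely many absolutely normalized points, as claimed.

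The main subtlety is the first step: justifying that an absolutely normalized representative of a point in $\wP_\w^n(\overline\Q)$ really may be taken to have rational integer coordinates, rather than only coordinates in $\O_K$ for some proper extension $K/\Q$. Once that interpretation is pinned down, the rest is immediate from \cref{prop-5} and the finiteness of bounded integer sets. An alternative route that sidesteps this subtlety is to use the map $\phi$ of \eqref{map} together with \cref{wh-H}: the hypothesis translates into $H(\phi(\p)) \le c_0^q$ for $\phi(\p) \in \P^n(\Q)$, and one concludes by classical Northcott on $\P^n$, or equivalently by \cref{thm_finite} with the degree bound $[\Q(\p):\Q] = 1$ built in.
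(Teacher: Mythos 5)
Your proposal is, in substance, the paper's own argument: the paper's proof is exactly your ``alternative route'' --- from absolute normalization it asserts $\Q(\p)=\Q$, so the degree bound in \cref{thm_finite} is automatically $1$ and that theorem applies. Your primary route (pass to an integral representative, invoke \cref{prop-5}(i) so that only the archimedean place contributes, deduce $|x_i|\le c_0^{q_i}$, and quotient by the finite action of $q$-th roots of unity from \cref{unique}) is just a direct unwinding of the same reduction, and is fine once its first step is granted.

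That first step is the genuine issue, and you have correctly located it but not closed it. From $\awgcd(\p)=1$ you may conclude that the \emph{field of absolute normalization} $\Q(\awgcd(\p))$ equals $\Q$, but this says nothing about the field generated by the coordinates themselves: a point such as $[\zeta:1]$ with $\zeta$ a root of unity is normalized, absolutely normalized, and of weighted height $1$, yet admits no representative with coordinates in $\Z$ and has $[\Q(\p):\Q]$ as large as you like. So ``absolutely normalized $\Rightarrow$ representative in $\Z^{n+1}$'' does not follow, and neither does ``absolutely normalized $\Rightarrow \Q(\p)=\Q$.'' Moreover, your suggested escape via $\phi$ and \cref{wh-H} does not sidestep this: to apply classical Northcott on $\P^n(\Q)$, or \cref{thm_finite} with degree bound $1$, you still need to know that $\phi(\p)$ lies in $\P^n(\Q)$, which is precisely the fact in question; without some a priori degree bound, bounded height alone never gives finiteness over $\overline\Q$. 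To be fair, the paper's proof has the identical gap --- it writes ``in this case $\Q(\p)=\Q$'' with no justification --- so you have faithfully reproduced the intended argument and, usefully, put your finger on its weak joint; but as written, neither your proof nor the paper's establishes the corollary without an additional hypothesis bounding $[\Q(\p):\Q]$.
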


\proof  Since $\p$ is absolutely normalized then $\wgcd (\p)=1$.  In this case $\Q (\p) = \Q$. The result follows from the above theorem.

\qed

\begin{cor}
For any number field $K$, the set 
\[ \{\p \in \wP_w^n(K):  \; \Q (\p) \subset K \; \text{ and } \;       \wh_K(\p) \leq c_0 \}, \]
is a finite set.
\end{cor}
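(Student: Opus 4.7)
The plan is to reduce the corollary directly to \cref{thm_finite} by controlling both quantities appearing there: the (absolute) weighted height and the degree of the field of definition. Since $K \subset \overline \Q$, the set under consideration is a subset of $\wP_w^n(\overline \Q)$, so once we verify the two hypotheses of \cref{thm_finite}, finiteness will follow.

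First I would handle the degree bound. The hypothesis $\Q(\p) \subset K$ immediately gives
\[
[\Q(\p):\Q] \leq [K:\Q] =: d_0,
\]
so the degree of the field of definition is uniformly bounded in terms of $K$ alone.

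Next I would translate the bound $\wh_K(\p) \leq c_0$ into a bound on $\wh_\Q(\p)$ (equivalently, on $\awh(\p)$). Since $\p$ is defined over $\Q(\p) \subset K$, \cref{prop-5}, part (ii), applied to the finite extension $K/\Q(\p)$, yields
\[
\wh_K(\p) = \wh_{\Q(\p)}(\p)^{[K:\Q(\p)]},
\]
whence $\wh_{\Q(\p)}(\p) = \wh_K(\p)^{1/[K:\Q(\p)]} \leq c_0$, using that $c_0 \geq 1$ (heights are $\geq 1$, so the bound is meaningful only in this regime). Passing to the absolute height,
\[
\awh(\p) = \wh_{\Q(\p)}(\p)^{1/[\Q(\p):\Q]} \leq c_0
\]
as well. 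This gives the height hypothesis of \cref{thm_finite}.

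Finally, with $[\Q(\p):\Q] \leq d_0$ and the weighted height bounded by $c_0$, \cref{thm_finite} produces finiteness of the set in $\wP_w^n(\overline \Q)$, hence of our subset of $\wP_w^n(K)$. I do not foresee a major obstacle here; the only subtle point is ensuring that the ``change of base field'' in the height behaves correctly, which is exactly what \cref{prop-5}(ii) guarantees. The other mild care to take is that $\wh$ is always $\geq 1$, so that exponentiating by $1/[K:\Q(\p)]$ preserves the direction of the inequality, but this is built into the definition of $\wh$.
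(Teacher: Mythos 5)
Your proof is correct, and it is somewhat more explicit than the paper's own argument, which is a one-line reduction. The paper asserts that $\Q(\p) \subset K$ forces $\p$ to be absolutely normalized over $K$ and then invokes the preceding corollary (finiteness of absolutely normalized points of bounded height), which in turn rests on \cref{thm_finite}; you instead go straight to \cref{thm_finite} by verifying its two hypotheses separately: the degree bound $[\Q(\p):\Q]\le [K:\Q]$ is immediate from the inclusion $\Q(\p)\subset K$, and the height bound follows from \cref{prop-5}(ii) together with the observation that $\wh \ge 1$ makes the exponent $1/[K:\Q(\p)]\le 1$ shrink the height, so $\awh(\p)\le \wh_K(\p)\le c_0$. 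What your route buys is precision: the paper's claim that the inclusion of fields implies absolute normalization is left unjustified (and is really a statement about the existence of a suitable representative), whereas your chain of inequalities only uses the base-change formula, which the paper proves. The one point to flag is that \cref{prop-5}(ii) is stated with base field $\Q(\awgcd(\p))$ and is applied in your argument with base field $\Q(\p)$; this is harmless because the proof of that proposition is just the degree formula and works for any finite extension $L/k$ once $\p$ admits a representative with coordinates in $k$, but it is worth saying explicitly that $\p$ has such a representative over its field of definition.
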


\proof  Since $\Q (\p) \subset K $ then $\p$ is absolutely normalized $\wP_w^n(K)$. The result follows from the above. 

\qed

The next result is the analogue of what is called Kronecker's theorem for  heights on projective spaces.

\begin{lem}
Let $K$ be a number field,  and let $\p=[x_0: \dots: x_n] \in \wP^n_w(K)$, where $\w = (q_0, \dots, q_n)$. Fix any $i$ with $x_{i} \neq 0$. Then $\wh (\p)= 1$ if  the ratio $x_j/\xi_{i}^{q_j}$, where $\xi_i$ is the $q_i$-th root of unity of $x_i$, is a root of unity or zero for every $0 \leq j \leq n$ and $j \neq i$.
\end{lem}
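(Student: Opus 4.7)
The plan is to exploit the scaling invariance of $\wh$ (already established in the well-definedness proof) to reduce to a point all of whose coordinates are either $1$, a root of unity, or zero, and then use the classical fact that roots of unity have absolute value $1$ at every place to conclude each local factor equals $1$.

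More precisely, let $L = K(\xi_i)$, which is a finite extension of $K$ of degree at most $q_i$. Viewing $\p$ as a point of $\wP_\w^n(L)$, set
\[
\p' = \tfrac{1}{\xi_i}\star \p = \bigl[x_0/\xi_i^{q_0} : \dots : x_{i-1}/\xi_i^{q_{i-1}} : 1 : x_{i+1}/\xi_i^{q_{i+1}} : \dots : x_n/\xi_i^{q_n}\bigr],
\]
so that the $i$-th coordinate of $\p'$ is $\xi_i^{q_i}/\xi_i^{q_i}=1$ and each other coordinate $y_j:=x_j/\xi_i^{q_j}$ is, by hypothesis, either zero or a root of unity. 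Since $\wh_L$ is a function on $\wP_\w^n(L)$ (independent of the choice of weighted-homogeneous representative), we have $\wh_L(\p) = \wh_L(\p')$.

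Next I would evaluate $\wh_L(\p')$ place by place. For any root of unity $\zeta$ and any $v\in M_L$, the identity $\zeta^N=1$ for some $N\geq 1$ forces $|\zeta|_v^N = 1$ and hence $|\zeta|_v = 1$. Therefore for each $v\in M_L$ and each $j\neq i$, we have $|y_j|_v^{n_v/q_j}\in\{0,1\}$, while the $i$-th entry contributes $|1|_v^{n_v/q_i}=1$. Thus
\[
\max_{0\le j\le n}\Bigl\{|y_j|_v^{n_v/q_j}\Bigr\} = 1 \qquad \text{for every } v\in M_L,
\]
and multiplying over all places gives $\wh_L(\p') = 1$, hence $\wh_L(\p)=1$.

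Finally, I invoke \cref{prop-5} (ii), which says $\wh_L(\p) = \wh_K(\p)^{[L:K]}$; taking $[L:K]$-th roots yields $\wh_K(\p)=1$, as required. The only genuine subtlety is ensuring that the scaling argument is permitted — i.e., enlarging the field from $K$ to $L$ to house $\xi_i$ — but since the weighted height behaves functorially under finite extensions (\cref{prop-5} (ii)) and $\wh_L$ is invariant under the $L^*$-action on weighted-homogeneous coordinates, this causes no difficulty. I do not anticipate any real obstacle beyond book-keeping for the two edge cases $y_j=0$ and $y_j$ a nontrivial root of unity, both of which give a local contribution of $1$.
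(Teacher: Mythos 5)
Your argument is correct and follows essentially the same route as the paper: adjoin $\xi_i$, rescale by $\tfrac{1}{\xi_i}$ so that every coordinate becomes $1$, zero, or a root of unity, and observe that each local factor of the height then equals $1$. The one place you are actually more careful than the paper is in passing to $L=K(\xi_i)$ and descending via \cref{prop-5}~(ii); the paper's proof works with $v\in M_K$ even though $\xi_i$ need not lie in $K$, so your bookkeeping is a genuine (if minor) improvement rather than a deviation.
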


\proof   Let  $\p=[x_0: \dots: x_i: \dots: x_n] \in \wP^n(K)$. Assume $x_i \neq 0$. Adjoin the $q_i$-th root of unity to $x_i$. Hence, let $x_i = \xi_i^{q_i}$  so that $wt(\xi_i) = 1$. Without loss of generality we can divide the coordinates of $\p$ by $\xi_i^{q_j}$, for $j \neq i$,  and then  we have 
$$\p=\left [\frac{x_0}{\xi_i^{q_0}}, \dots , 1, \dots, \frac{ x_n}{\xi_i^{q_n}}\right].$$ 
For simplicity let $\p=[y_0: \dots: 1: \dots: y_n]$.  If $y_l$   is a root of unity for every $0 \leq l \leq n$ and $l \neq i$ then $|y_l|_v=1$ for every $v \in M_K$. Hence, $\wh (\p)=1$.
\qed


\section{Concluding remarks}

The weighted greatest common divisors are a natural extension of the concept of greatest common divisors to weighted tuples.   Wether the usual properties of the greatest common divisors for Dedekind Domains can be extended to the weighted greatest common divisors is a natural question that needs further study. 
Even more generally  how the ideal calculus (\cite[Appendix A]{razon} ) can be generalized in terms of weighted ideals? For example, can Lemma~5 and Lemma~6 in \cite{razon} be generalized for weighted greatest common divisors? 

From the computational point of view it seems as there is no escape from the fact that to compute the weighted greatest common divisor one has to factor integers into primes. However, this is a problem that surely will be further investigated by computer algebra experts.  

The theory of heights is fundamental in arithmetic geometry and heights for weighted projective spaces provide powerful tools to study rational points in such spaces or on weighted Abelian varieties. The weighted projective height has the basic properties of the projective height. Whether this can be used to fully develop an arithmetic geometry machinery over weighted projective spaces remains to be seen.

\nocite{*}
\bibliographystyle{amsplain} 

\bibliography{mybib}{}

\end{document}